\newcommand{\eqref}[1]{(\ref{#1})}
\newcommand{\sign}{\operatorname{sign}}
\newcommand{\1}{\mathbh{1}}
\newtheorem{lemma}[rk]{Lemma}
\newtheorem{propo}[rk]{Proposition}
\newtheorem{theo}[rk]{Theorem}
\newtheorem{coro}[rk]{Corollary}
\begin{document}
\begin{frontmatter}

\title{Some particular self-interacting diffusions: Ergodic behaviour
and almost sure convergence}
\runtitle{Self-interacting diffusions: Ergodicity and convergence}

\begin{aug}
\author[1]{\fnms{S\'{e}bastien} \snm{Chambeu}\thanksref{1}\ead[label=e1]{Sebastien.Chambeu@math.u-psud.fr}} \and
\author[2]{\fnms{Aline} \snm{Kurtzmann}\corref{}\thanksref{2}\ead[label=e2]{aline.kurtzmann@iecn.u-nancy.fr}}
\runauthor{S. Chambeu and A. Kurtzmann}
\address[1]{Laboratoire de mod\'{e}lisation
stochastique et statistique, Universit\'{e} Paris Sud, B\^{a}timent 425,
F-91405 Orsay Cedex, France. \printead{e1}}
\address[2]{Institut Elie Cartan, Universit\'{e} Nancy 1, B.P. 70239, F-54506 Vand\oe uvre-l\`{e}s-Nancy Cedex, France.
\printead{e2}}
\end{aug}

% HISTORY:
\received{\smonth{11} \syear{2008}}
\revised{\smonth{6} \syear{2007}}

% ABSTRACT
\begin{abstract}
This paper deals with some self-interacting diffusions
$(X_t,t\geq 0)$ living on $\mathbb{R}^d$. These diffusions are
solutions to stochastic differential equations:
\[\mathrm{d}X_t =
\mathrm{d}B_t - g(t)\nabla V(X_t - \overline{\mu}_t)\,\mathrm{d}t,
\]
where $\overline{\mu}_t$ is the
empirical mean of the process $X$, $V$ is an asymptotically
strictly convex potential and $g$ is a given function. We study
the ergodic behaviour of $X$ and prove that it is strongly related
to $g$. Actually, we show that $X$ is ergodic (in the limit quotient sense) if and only if $\overline{\mu}_t$
converges a.s. We also give some conditions (on $g$ and $V$) for the almost sure
convergence of $X$.
\end{abstract}

% KEYWORDS
\begin{keyword}
\kwd{reinforced processes}
\kwd{self-interaction diffusion}
\kwd{stochastic approximation}
\end{keyword}

\end{frontmatter}

%s1 ###
\section{Introduction}
Processes with path interaction have been an intensive research area
since the seminal work of Norris, Rogers and Williams \cite{NoRoWi}.
More precisely, self-interacting diffusions have been first introduced
by Durrett and Rogers \cite{DuRo} under the name of Brownian polymers.
They proposed a~model for the shape of a growing polymer. Denoting by
$X_t$ the location of the end of the polymer at time $t$, $X$ satisfies
a stochastic differential equation (SDE) with a drift term depending on
its own occupation measure (in dimension 1, we define it through the
local time of $X$). One is then interested in rescaling $X$ (see
\cite{CLeJ,CrMo,HeRo,MoTa,Rai97}). Another model of polymers has been
proposed by Bena\"{i}m, Ledoux and Raimond \cite {BeLeRa}. They have
studied a class of self-interacting diffusions depending on the
empirical measure. When the process is living on a compact Riemannian
manifold, they have proved that the asymptotic behaviour of the
empirical measure can be related to the analysis of some deterministic
dynamical flow defined on the space of the Borel probability measures.
Bena\"{i}m and Raimond \cite{BeRa05} went further in this study and in
particular, they gave sufficient conditions for the a.s. convergence of
the empirical measure. Very recently, Raimond \cite{Rai06} has
generalized the previous work. He has studied the asymptotic properties
of a process $X$, living on a Riemannian compact manifold $M$, solution
to the SDE
%
%e1.1 ###
\begin{equation}\label{eq:raimond}
\mathrm{d}X_t = \mathrm{d}B_t - g(t)\nabla V*\mu_t (X_t)\,\mathrm{d}t,
\end{equation}
with $V*\mu_t(x) = \frac{1}{t}\int_0^t V(x,X_s)\,\mathrm{d}s$, $\mu_t =
\frac{1}{t}\int_0^t \delta_{X_s}\,\mathrm{d}s$, $\vert g(t)\vert \leq
a\log(t)$ and $g'(t) = \mathrm{O}(t^{-\gamma})$ with $0<\gamma \leq 1$. He has
proved that, unless $g$ is constant, the approximation of $\mu_t$ by a
deterministic flow is no longer valid. He has more particularly
investigated the example $M = \mathbb{S}^n$ and $V(x,y) = -\cos d(x,y)$
(where $d$ is the geodesic distance on $\mathbb{S}^n$) and proved that
a.s. $\mu_t$ converges weakly toward a Dirac measure. For an overview
on reinforced processes, we refer the reader to Pemantle's survey
\cite{Pemantle}.

In the present paper, we are concerned with some self-interacting
processes living on $\mathbb{R}^d$. Consider a smooth potential $V\dvtx
\mathbb{R}^{d} \rightarrow \mathbb{R}_+$ and an application $g\dvtx
\mathbb{R}_+ \rightarrow \mathbb{R}_+^*$. Our goal is to study the
ergodic behaviour of the self-interacting diffusion $X$ solution to
%
%e1.2 ###
\begin{equation} \label{equX1}
    \mathrm{d}X_t = \mathrm{d}B_t - g(t)\nabla V(X_t - \overline{\mu}_t)\,\mathrm{d}t,\qquad
    X_0 = x,
\end{equation}
where $B$ is a standard Brownian motion and $\overline{\mu}_t$ denotes
the empirical mean of $X$:
%
%e1.3 ###
\begin{eqnarray}\label{defmu}
\overline{\mu}_t = \frac{1}{r+t}\biggl(r\bar{\mu} + \int_{0}^{t}
X_{s}\,
\mathrm{d}s\biggr),\qquad\overline{\mu}_0 = \overline{\mu}.
\end{eqnarray}
Here $\mu $ is an initial (given) probability measure on
$\mathbb{R}^d$, $\bar{\mu}$ denotes the mean of $\mu$ and $r>0$ is an
initial weight (it permits us to consider any initial probability
measure).

First, note that for a quadratic interaction potential $V$, the process
satisfying \eqref{equX1} is exactly of the form of \eqref{eq:raimond}
and, in both cases, the occupation measure is penalized by $g(t)$.
Afterwards, a natural generalization of this process is the class of
self-interacting diffusions discussed here. The interesting point is
that we manage to study precisely the asymptotic behaviour of $X$ and
prove a convergence criterion. Moreover, this model could be used to
represent the behaviour of social insects; for instance, ant trails.
Indeed, ants mark their paths with pheromones. Certain ants lay down an
initial trail of pheromones as they return to the nest with food. This
trail attracts other ants and serves as a guide. As long as the food
source remains, the pheromone trail will be continually renewed.
Despite the quick evaporation, the path is reinforced and so, the ants
manage to gradually find the best route. In this (simplified) model,
the function $g$ reflects the speed of evaporation and $X$ denotes the
trail.

In order to study the solution to \eqref{equX1}, it is natural to
introduce the process $Y$, defined by
%
%e1.4 ###
\begin{equation}\label{defY}
Y_t = X_t -\overline{\mu}_t.
\end{equation}
It is easily seen that $(Y_{t},t\geq 0)$ is the solution to the SDE
%e1.5 ###
\begin{equation}\label{equYVV}
    \mathrm{d}Y_{t} = \mathrm{d}B_{t} - g(t) \nabla
V(Y_{t})\,\mathrm{d}t - Y_t\frac{\mathrm{d}t}{r+t},\qquad
    Y_{0} = x - \overline{\mu}
\end{equation}
and $\mathrm{d}\overline{\mu}_t = Y_t \frac{\mathrm{d}t}{r+t}.$ As $Y$
is a (non-homogeneous) Markov process, it is easier to study $Y$ than
$X$. Indeed, we will prove that $Y$ converges a.s. and satisfies the
pointwise ergodic theorem. Due to that, the behaviour of $X$ could seem
a bit easy at first glance. But it really shows unexpected behaviours
and, in particular, it does not satisfy the pointwise ergodic theorem
in general (because $\overline{\mu}_t$ does not converge, except for
functions $g$ going fast to infinity). This explains the difficulty of
studying more general self-interacting diffusions in non-compact spaces
(see \cite{AK}), driven by the generic equation
$\mathrm{d}X_t = \mathrm{d}B_t - \int_{\mathbb{R}^d} \nabla V(X_t,x)\,
\mathrm{d}\mu_t(x)\,\mathrm{d}t$.

The remainder of the paper is organized as follows. First, we enumerate
the hypotheses and state the main results in Section
\ref{s:assumptions}. We motivate our study, in Section \ref{s:quadra},
by the basic case when $V$ is quadratic, for which we have an explicit
expression of $X$ (in terms of Brownian martingale). Section
\ref{s:extrema} deals with the description of the behaviour of $Y$ near
the local extrema of $V$. Finally, Section \ref{s:X} is devoted to the
proof of the main results.

%s2 ###
\section{Technical assumptions and main results}\label{s:assumptions}

In the sequel, $(\cdot,\cdot)$ stands for the Euclidian scalar product.
We also denote by $\mathcal{P}(\mathbb{R}^d)$ the set of probability
measures on $\mathbb{R}^d$.

Consider the potential $V\dvtx \mathbb{R}^d\rightarrow \mathbb{R}_+$. Let
$\mathit{Max} =\{M_{1},\ldots, M_{p}\}$ be the (finite) set of saddle points and
local maxima of $V$ and denote by $\mathit{Min}=\{m_{1},\ldots, m_{n}\} $ the
(finite) set of the local minima of $V$. So $\mathit{Min}\,\cup\,\mathit{Max}$ is the set
of critical points of $V$. We assume that $V$ is either quadratic
(Section \ref{s:quadra}) or:
\begin{enumerate}[(2)]
    \item[(1)] (\textit{regularity and positivity}) $V\in \mathcal{C}^2(\mathbb{R}^d)$ and $V> 0$;
    \item[(2)] (\textit{convexity}) $V= W+\chi$, where $\chi$ is a compactly supported function such that
    $\nabla \chi$ is $\tilde{C}$-Lipschitz (with $\tilde{C}>0$) and there exists $c>0$ such that $\nabla^2 W \geq c \mathit{Id}$;
    \item[(3)] (\textit{growth}) there exists $a>0$ such that for all $x\in \mathbb{R}^d$, we have
    %
%e2.1 ###
    \begin{equation}\label{growth}
    \Delta V(x)\leq aV(x)\quad\mbox{and}\quad\lim_{|x|\rightarrow \infty} \frac{|\nabla V(x)|^2}{V(x)} = \infty;
    \end{equation}
    \item[(4)] (\textit{critical points})   $\forall m_i$, $\forall \xi \in \mathbb{R}^d$, $(\nabla^2
V(m_i)\xi,\xi)>0$ and for all $M_i$, $\nabla^2 V(M_i)$ admits a~negative eigenvalue.
\end{enumerate}

\begin{rk}
By the growth condition \eqref{growth}, $|\nabla V|^2 - \Delta V$ is
bounded by below.
\end{rk}

Suppose also that $g\dvtx \mathbb{R}_+ \rightarrow \mathbb{R}_+$ is
non-decreasing and $g\in \mathcal{C}^1(\mathbb{R}_+)$. We denote by
$g(\infty)$ the limit of $g(t)$ and we exclude the trivial case where
$g$ is identically zero, so that $g(\infty)>0$. Let $G(t) :=\int_0^t
g(s)\,\mathrm{d}s$ and $G^{-1}$ be its generalized inverse: $G^{-1}(t)
:= \inf \{u\ge 0; G(u) \ge t\}$.

\begin{rk}\label{rk:G}
If $g(\infty) =\infty$, then for all $T>0$, we have that $G^{-1}(t+T) -
G^{-1}(t) \mathop{\longrightarrow}\limits_{t\rightarrow\infty} 0$.
\end{rk}

The following easy result will be very useful in the sequel.
\begin{lemma}\label{l:ipp}
Suppose that $g'(t)/g^2(t)$ converges to 0. Then the following hold:
\begin{longlist}
\item for any $c>0$, $\int_0^t s^2 \mathrm{e}^{2cG(s)}\,\mathrm{d}s = \mathrm{O}(t^2
\mathrm{e}^{2cG(t)}/g(t))$;

\item if $g(\infty)=\infty$, then we have $\int_0^t \frac{g'(s)}{g(s)^2}
G(s)\,\mathrm{d}s = \mathrm{O}(t)$;

\item for $H(t) := \int_{0}^{t} \frac{\mathrm{e}^{-cG(u)}}{(r+u)^{2}}\,
\mathrm{d}u$, the following expansion holds:
\[
H(t) = H(\infty) - \frac{1}{c g(t)(r+t)^{2}}\mathrm{e}^{-cG(t)} +
\mathrm{o}\biggl(\frac{\mathrm{e}^{-cG(t)}}{t^{2}g(t)}\biggr).
\]
\end{longlist}
\end{lemma}
\begin{pf}
We deduce all these estimates from an integration by parts:
\[
\int_0^t
s^2 \mathrm{e}^{2cG(s)} \,\mathrm{d}s = \frac{t^2 \mathrm{e}^{2cG(t)}}{2cg(t)} - \int_0^t
\biggl(\frac{s}{g(s)} - \frac{s^2 g'(s)}{2g(s)^2}\biggr)
\frac{\mathrm{e}^{2cG(s)}}{c}\,
\mathrm{d}s =\mathrm{O}\bigl(t^2 \mathrm{e}^{2cG(t)}/g(t)\bigr),
\]
and we obtain $H(t) - H(s) =
\frac{\mathrm{e}^{-cG(s)}}{r+s} - \frac{\mathrm{e}^{-cG(t)}}{r+t} - c\int_{s}^{t}
g(u)\mathrm{e}^{-cG(u)}\frac{\mathrm{d}u}{r+u}$. Similarly, for $t$ large enough
and $u$ such that $g(u)>0$, we find $\int_u^t \frac{g'(s)}{g(s)^2}
G(s)\,
\mathrm{d}s = -\frac{G(t)}{g(t)} + \frac{G(u)}{g(u)} + t-u =\mathrm{O}(t)$.
\end{pf}

%s2.1 ###
\subsection{Existence}
We begin by proving that the SDE admits a unique global strong
solution.
\begin{propo}
For any $x\in \mathbb{R}^d$, $\mu\in\mathcal{P}(\mathbb{R}^d)$ and
$r>0$, there exists a unique global strong solution $(X_t, t\geq 0)$ of
\eqref{equX1}.
\end{propo}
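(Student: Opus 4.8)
The plan is to pass from \eqref{equX1} to the equivalent system \eqref{equYVV} for the pair $(Y,\overline{\mu})$, with $Y_t = X_t - \overline{\mu}_t$. Since $X_t = Y_t + \overline{\mu}_t$ and conversely $\overline{\mu}_t = \overline{\mu} + \int_0^t Y_s\,\frac{\mathrm{d}s}{r+s}$ is recovered from $Y$ by a pathwise integration, the correspondence $X \leftrightarrow (Y,\overline{\mu})$ is a bijection between solutions. Thus it suffices to prove that the equation for $Y$,
$$\mathrm{d}Y_t = \mathrm{d}B_t - b(t,Y_t)\,\mathrm{d}t, \qquad b(t,y) := g(t)\nabla V(y) + \frac{y}{r+t},$$
admits a unique global strong solution; the processes $\overline{\mu}$ and then $X$ are reconstructed afterwards.

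First I would establish local existence and uniqueness. Because $V \in \mathcal{C}^2(\mathbb{R}^d)$, the map $\nabla V$ is $\mathcal{C}^1$ and hence locally Lipschitz; since $g \in \mathcal{C}^1(\mathbb{R}_+)$ is continuous and $y \mapsto y/(r+t)$ is linear, the drift $b$ is continuous in $(t,y)$ and locally Lipschitz in $y$, uniformly on compact time intervals. The diffusion coefficient is the identity. Classical SDE theory then yields a unique strong solution up to an explosion time $\tau = \lim_n \tau_n$, where $\tau_n := \inf\{t \geq 0 : |Y_t| \geq n\}$.

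The core of the argument is to show $\tau = \infty$ almost surely by a Lyapunov estimate exploiting the convexity hypothesis. Writing $V = W + \chi$, the bound $\nabla^2 W \geq c\,\mathrm{Id}$ gives, after integrating along segments, $(\nabla W(y) - \nabla W(0), y) \geq c|y|^2$, whence $(\nabla W(y), y) \geq \tfrac{c}{2}|y|^2 - \tfrac{|\nabla W(0)|^2}{2c}$ by Young's inequality; since $\nabla \chi$ is compactly supported, $(\nabla \chi(y), y)$ is bounded below. Combining these yields a constant $C_0 > 0$ with $(\nabla V(y), y) \geq \tfrac{c}{2}|y|^2 - C_0$ for all $y$. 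As $g \geq 0$, the drift is then one-sided dissipative: $(b(t,y), y) = g(t)(\nabla V(y),y) + \tfrac{|y|^2}{r+t} \geq -C_0 g(t)$. Applying Itô's formula to $|Y_t|^2$, stopping at $\tau_n$, and taking expectations kills the martingale part and gives, on any finite interval $[0,T]$,
$$\mathbb{E}|Y_{t\wedge\tau_n}|^2 = |Y_0|^2 - 2\,\mathbb{E}\!\int_0^{t\wedge\tau_n}\!(b(s,Y_s), Y_s)\,\mathrm{d}s + d\,\mathbb{E}(t\wedge\tau_n) \leq |Y_0|^2 + \bigl(d + 2C_0\,g(T)\bigr)T,$$
where I used that $g$, being non-decreasing, satisfies $g(s) \leq g(T)$ on $[0,T]$. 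The right-hand side is finite and independent of $n$, so $n^2\,\mathbb{P}(\tau_n \leq T) \leq \mathbb{E}|Y_{T\wedge\tau_n}|^2$ stays bounded; hence $\mathbb{P}(\tau_n \leq T) \to 0$ and, since $\{\tau \le T\} \subseteq \{\tau_n \le T\}$ for every $n$, we get $\mathbb{P}(\tau \le T) = 0$ for all $T$, i.e. $\tau = \infty$ a.s.

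With $Y$ globally defined, $\overline{\mu}_t = \overline{\mu} + \int_0^t Y_s\,\frac{\mathrm{d}s}{r+s}$ is finite for all $t$ and $X_t := Y_t + \overline{\mu}_t$ solves \eqref{equX1}; uniqueness is inherited from that of $Y$, establishing the proposition. I expect the non-explosion step to be the main obstacle: one must extract the one-sided growth bound on $(\nabla V(y), y)$ from the splitting $V = W + \chi$ — in particular controlling the non-convex, compactly supported contribution of $\chi$ — and then run the localization and moment estimate carefully because of the time dependence carried by the factor $g$.
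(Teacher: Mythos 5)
Your proof is correct and follows the same overall skeleton as the paper's: reduce to the autonomous-in-law equation for $Y$, invoke standard theory for local existence and pathwise uniqueness, and rule out explosion by a stopped moment estimate. The one genuine difference lies in the non-explosion step. The paper applies It\^o's formula to $V(Y_t)$ and uses the \emph{growth} hypothesis \eqref{growth} (via the facts that $(\nabla V(y),y)\to\infty$ and that $|\nabla V|^2-\Delta V$ is bounded below) to get $\mathbb{E}V(Y_{t\wedge\tau_n})\leq \mathbb{E}V(Y_0)+Ct$, with $V$ playing the role of the proper Lyapunov function. You instead apply It\^o's formula to $|Y_t|^2$ and extract the one-sided bound $(\nabla V(y),y)\geq \tfrac{c}{2}|y|^2-C_0$ from the \emph{convexity} splitting $V=W+\chi$, so that the drift is dissipative up to the error $C_0\,g(t)$, which is integrable on compacts since $g$ is continuous and non-decreasing. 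Your route is slightly more elementary in that it bypasses the growth condition entirely and needs no lower bound on $|\nabla V|^2-\Delta V$; the paper's choice of $V$ as Lyapunov function has the side benefit of controlling $\int_0^t g(s)|\nabla V(Y_s)|^2\,\mathrm{d}s$, a quantity it reuses later (e.g.\ in Proposition \ref{propVcritique}). Both arguments are complete for the proposition as stated, and your localization ($\tau_n$ as exit times of balls, the bounded martingale term, and $n^2\,\mathbb{P}(\tau_n\le T)\le \mathbb{E}|Y_{T\wedge\tau_n}|^2$) is carried out correctly.
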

\begin{pf}
The local existence and uniqueness of the solution to \eqref{equX1} is
standard. We only need to prove here that $Y$, hence $X$ (since $X_t :=
Y_t + \int_0^t Y_s \frac{\mathrm{d}s}{r+s}$), does not explode in a
finite time. To this aim, apply It\^{o}'s formula to the function
$x\mapsto V(x)$:
\[
\mathrm{d}V(Y_t) = (\nabla V(Y_t),\mathrm{d}B_t) +\biggl(\frac{1}{2}\Delta
V(Y_t) - g(t) |\nabla V(Y_t)|^2 - \frac{1}{r+t}(\nabla V(Y_t),Y_t)
\biggr)\,
\mathrm{d}t,
\]
and introduce the sequence of stopping times $\tau_0=0$ and
\[
\tau_n = \inf\biggl\{t\geq 0; V(Y_t) +\int_0^t g(s)|\nabla V(Y_s)|^2\,
\mathrm{d}s >n\biggr\}.
\]
By the convexity condition, we have $(\nabla
V(y),y) \mathop{\longrightarrow}\limits_{|y|\rightarrow +\infty} +\infty$ and
so the growth condition \eqref{growth} implies the existence of $C$
such that $\mathbb{E}V(Y_{t\wedge \tau_n}) \leq \mathbb{E}V(Y_0) + \mathrm{e}^{C
t}.$
\end{pf}

%s2.2 ###
\subsection{Results}
We give now a description of the asymptotic behaviour of both $\mu_t$
and $X_t$.
\begin{defn} The process $X$ satisfies the pointwise ergodic
theorem if there exists a measure $\mu_\infty$ such that a.s. $\mu_t :=
\frac{1}{r+t}(r\mu+\int_0^t \delta_{X_s}\,\mathrm{d}s) \rightarrow
\mu_\infty$ for the weak convergence of measures: For all continuous
bounded functions $f$, $\frac{1}{t}\int_0^t f(X_s)\,\mathrm{d}s
\stackrel{\mathrm{a.s.}}{\longrightarrow}  \int f\,\mathrm{d}\mu_\infty$.
\end{defn}

\begin{theo}
\begin{enumerate}[(1)]
    \item[(1)] The process $Y$ satisfies the pointwise ergodic theorem:
    Almost surely, the empirical measure of $Y$ converges weakly to a measure, which is a convex combination of Dirac
    measures taken in the local minima of $V$.
    \item[(2)] The process $X$ satisfies the pointwise ergodic theorem
    if and only if the mean process $\overline{\mu}_t$ converges almost surely.
\end{enumerate}
\end{theo}

A necessary condition for the convergence of $\overline{\mu}_t$ is that
0 is the unique minimum of $V$. We will prove this result in Section
\ref{ss:ergoY}. Indeed, what we need here is not only the convergence
of $Y_t$ to zero, but the convergence of the integral $\int_0^t Y_s
\frac{\mathrm{d}s}{r+s}$, which depends on the speed of convergence of
$Y_t$. The main result of this paper is the following description of
the asymptotic behaviour of~$X$, shown in Section \ref{ss:psX}:
\begin{theo}
Suppose that $\sqrt{g(t)^{-1}\log G(t)} = \mathrm{O}(h(t)^{-1})$, where $G$ is a
primitive of $g$ and $\int_0^\infty \frac{\mathrm{d}s}{(1+s) h(s)}
<\infty$.
\begin{enumerate}[(1)]
    \item[(1)] The process $Y$ converges almost surely to $Y_\infty$,
    where $Y_\infty$ belongs to the set of the local minima of $V$.
    For each local minimum $m$ of $V$, one has
    $\mathbb{P}(Y_\infty = m)>0$.
    \item[(2)] On the set $\{Y_\infty =0\}$, both $X_t$ and
    $\overline{\mu}_t$ converge almost surely to $\overline{\mu}_\infty := \overline{\mu}+ \int_0^\infty Y_s
    \frac{\mathrm{d}s}{r+s}$, whereas on the set $\{Y_\infty \neq
    0\}$, we have that $\lim_{t\rightarrow\infty} \frac{X_t}{\log t} =
    Y_\infty$.
\end{enumerate}
\end{theo}

%s3 ###
\section{A motivating example}\label{s:quadra}
We consider $V(x) = \frac{1}{2}(x, c x)$, where $c$ is a symmetric
positive definite matrix. Let $X$ be the solution of the SDE
%e3.1 ###
\begin{equation} \label{equX2}
    \mathrm{d}X_t = \mathrm{d}B_t - g(t)\biggl(cX_t  - \frac{r}{r+t} c\overline{\mu} - \frac{1}{r+t}\int_0^t c
    X_s\,
\mathrm{d}s \biggr) \,\mathrm{d}t,\qquad      X_0 = x.
\end{equation}
Without any loss of generality, we suppose that $d =1$. When $d\ge 1$,
it suffices to diagonalize the matrix $c$ and to remark that, for an
orthogonal matrix $U$, the process $(U\cdot B_s, s\geq 0)$ is also a~Brownian motion.

%s3.1 ###
\subsection{Explicit expression of $X$}

\begin{lemma}
If $X$ is the solution to \eqref{equX2}, then we have
\[
Y_{t} := X_t-
\bar{\mu}_t = \frac{\mathrm{e}^{-cG(t)} }{r+t}
\biggl(\int_{0}^{t}(r+s)\mathrm{e}^{cG(s)}\,\mathrm{d}B_{s}+r(x-\overline{\mu}) \biggr).
\]
\end{lemma}
\begin{pf}
The process $Y$ satisfies
%e3.2 ###
\begin{equation}\label{equY}
\mathrm{d}Y_t = \mathrm{d}B_t - \biggl( cg(t)+ \frac{1}{r+t}\biggr)
Y_t\,
\mathrm{d}t,\qquad Y_0 = x - \overline{\mu}.
\end{equation}
To express $Y$ in terms of a Brownian martingale, we consider the
function of $Y$ defined by $U_{t} := (r+t)\mathrm{e}^{cG(t)} Y_{t}$. Then
It\^{o}'s formula implies
\[
 \mathrm{d}U_t = (r+t) \mathrm{e}^{cG(t)}\,\mathrm{d}B_{t},\qquad  U_0 = r(x - \overline{\mu}).
\]
\upqed\end{pf}

\begin{coro} \label{formuleX}
Let $F(t) = \int_0^t \mathrm{e}^{-cG(s)} \frac{g(s)}{r+s}\,\mathrm{d}s$. The
solution to the SDE \eqref{equX2} is given by
\[
X_t = x +
rc(\overline{\mu} -x) F(t) + \int_0^t \bigl[1 - (r+s) c\mathrm{e}^{cG(s)}\bigl(F(t) -
F(s)\bigr) \bigr]\,\mathrm{d}B_s.
\]
\end{coro}
\begin{pf}
Remark that $\frac{\mathrm{d}}{\mathrm{d}t}\overline{\mu}_{t} =
\frac{Y_{t}}{r+t} $. So, by Fubini's theorem for stochastic integrals
(see \cite{ReY}, page~175), we have
\[
\overline{\mu}_{t}= \int_{0}^{t}
(r+s) \mathrm{e}^{cG(s)} \bigl(H(t) -H(s)\bigr)\, \mathrm{d}B_{s} + r(x-\bar{\mu})H(t) +
\bar{\mu}
\]
with $ H(t) = \int_{0}^{t} \frac{\mathrm{e}^{-cG(u)}}{(r+u)^{2}}\,
\mathrm{d}u = \frac{1}{r} - cF(t) - \frac{\mathrm{e}^{-cG(t)}}{r+t}$. As $X_t =
Y_t +\overline{\mu}_t$, the latter result implies the desired
expression.
\end{pf}

%s3.2 ###
\subsection{Ergodic result}
We begin to prove the pointwise ergodic theorem for the following
non-homogeneous\break \mbox{(Gauss-)Markov} process.

\begin{propo}\label{ergodic} Let $a \dvtx \mathbb{R}_+ \rightarrow
\mathbb{R_+}$ be a continuous function, $A(t) := \int_0^t a(s)\,
\mathrm{d}s$ and $K(t) := \mathrm{e}^{-2A(t)}\int_0^t \mathrm{e}^{2A(s)}\, \mathrm{d}s$.
Suppose that $a(\infty) = \lim_{t\rightarrow\infty} a(t)$ exists and is
non-zero, so that $K(\infty)= \frac{1}{2a(\infty)} < \infty$. Consider
the process
\[
\mathrm{d}Z_t = -a(t) Z_t\,
\mathrm{d}t + \mathrm{d}B_t,\qquad Z_0 = z.
\]
Then, denoting by $\gamma$ the
centered Gaussian measure with variance $K(\infty)$ (with the
convention $\gamma=\delta_0$ for $K(\infty)=0$), we have for all
continuous bounded functions $\varphi$
\[
\frac{1}{t}\int_0^t \varphi(Z_s)\,\mathrm{d}s \mathop{\stackrel{a.s.}{\longrightarrow} }_{t\rightarrow\infty}
\int \varphi(z) \gamma(\mathrm{d}z).
\]
\end{propo}
\begin{pf}
We prove the result for the Fourier transform. First, note that
\[
Z_{t}= \mathrm{e}^{-A(t)} \biggl(\int_{0}^{t} \mathrm{e}^{A(s)}\,\mathrm{d}B_{s} + z\biggr).
\]
Let $\mathcal{F}_s := \sigma (B_u, 0\leq u\leq s)$. Knowing $\mathcal{F}_s$,
$Z_t$ has the Gaussian distribution with mean $m(s,t) := \mathrm{e}^{-(A(t) - A(s))}Z_s$ and
variance $K(s,t) := \mathrm{e}^{-2A(t)}\int_s^t \mathrm{e}^{2A(u)}\,\mathrm{d}u$. Fix
$t\in \mathbb{R}_+,u\in \mathbb{R}$ and define the martingale
$M_s^{t,u} := \mathbb{E}( \mathrm{e}^{\mathrm{i}uZ_t}| \mathcal{F}_s ) = \exp\{\mathrm{i}um(s,t) -
\frac{u^2}{2}K(s,t)\}$. Applying It\^{o}'s formula to $s\mapsto
M_s^{t,u}$, we find that $\mathrm{d}M_s^{t,u} = \mathrm{i}u \mathrm{e}^{-(A(t)-A(s))}
M_s^{t,u}\,\mathrm{d}B_s$, and so
\[
\mathrm{e}^{\mathrm{i}uZ_t} = \mathbb{E}\mathrm{e}^{\mathrm{i}uZ_t} +
\int_0^t \mathrm{i}u\mathrm{e}^{-(A(t) - A(s))}M_s^{t,u}\,\mathrm{d}B_s.
\]
Then, by
Fubini's theorem for stochastic integrals, we easily obtain
%e3.3 ###
\begin{equation}\label{quadraFub}
\int_0^t \mathrm{e}^{\mathrm{i}uZ_s}\,\mathrm{d}s = \int_0^t
\mathbb{E}\mathrm{e}^{\mathrm{i}uZ_s}\,
\mathrm{d}s + \int_0^t \mathrm{d}B_s \int_s^t
\mathrm{i}u\mathrm{e}^{-(A(r)-A(s))}M_s^{r,u}\,\mathrm{d}r.
\end{equation}
As $Z_t$ is Gaussian with variance $K(0,t)$, it converges in
distribution to a Gaussian variable of law $\gamma =
\mathcal{N}(0,K(\infty))$. Because of Ces\`{a}ro's result, we have
\[
\lim_{t\rightarrow \infty} \frac{1}{t} \int_0^t \mathbb{E}
\mathrm{e}^{\mathrm{i}uZ_s}\,
\mathrm{d}s = \mathrm{e}^{-(u^2/2)K(\infty)}.
\]
We wish to find an
asymptotic equivalent to the stochastic factor of \eqref{quadraFub}. To
this aim, consider $N_{s,t}^u(v) := \int_s^t \mathrm{i}u \mathrm{e}^{A(v)-A(r)}
M_v^{r,u}\,\mathrm{d}r$. First, on the set $\{\int_0^\infty
\langle N^u_{\cdot,t}(s)\rangle_s\,\mathrm{d}s <\infty\}$, the local martingale
$\int_0^t N_{s,t}^u(s)\, \mathrm{d}B_s$ converges a.s. to a finite
variable and thus is of the order of $\mathrm{o}(t)$. Actually, we decompose it
as
%e3.4 ###
\begin{equation}\label{quadraFub2}
\int_0^t N_{s,\infty}^u(s)\,\mathrm{d}B_s - \int_0^t
N^u_{t,\infty}(s)\,\mathrm{d}B_s.
\end{equation}
On the set $\{\int_0^\infty \langle N^u_{\cdot,t}(s)\rangle _s \,\mathrm{d}s
=\infty\}$, the LLN for martingales implies a.s.
\[
\int_0^t
\mathrm{d}B_s \int_s^{\infty} \mathrm{i}u\mathrm{e}^{-(A(r)-A(s))}M_s^{r,u}\,\mathrm{d}r =
\mathrm{o}\biggl(\int_0^t \biggl|\int_s^{\infty} \mathrm{i}u\mathrm{e}^{-(A(r)-A(s))} M_s^{r,u}\,\mathrm{d}r
\biggr|^2\,\mathrm{d}s\biggr).
\]
Indeed, we obtain the following upper bound by
using the initial definition of $M_s^{r,u}$:
\[%| \int_s^t iu \mathrm{e}^{-(A(r)-A(s))} M_s^{r,u} \mathrm{d}r|
|N_{s,t}^u(s)|\leq |u|\int_s^t \mathrm{e}^{A(s)-A(r)}\,\mathrm{d}r = |u|\mathrm{e}^{A(s)}
(I_t -I_s),
\]
where $I_t := \int_0^t \mathrm{e}^{-A(r)}\, \mathrm{d}r = I_\infty
-\frac{\mathrm{e}^{-A(t)}}{a(t)}+ \mathrm{o}(\frac{\mathrm{e}^{-A(t)}}{a(t)} )$, we find by the
triangle inequality that $\int_0^t \mathrm{e}^{2A(s)}(I_t-I_s)^2\,\mathrm{d}s
=\mathrm{O}(t)$. So we have
\[
\mathbb{E}\biggl(\int_0^t N_{t,\infty}^u(s) \,\mathrm{d}B_s\biggr)^2 = \int_0^t
\mathbb{E} (N_{t,\infty}^u(s)^2)\,\mathrm{d}s \leq |u|^2\int_0^t
\mathrm{e}^{2A(s)}\,\mathrm{d}s (I_\infty-I_t)^2 = \mathrm{O}(1).
\]
Borel--Cantelli's lemma permits us to conclude that
$\frac{1}{t}\int_0^t N_{t,\infty}^u(s)\,\mathrm{d}B_s$ converges a.s. to
0.
\end{pf}

\begin{theo} \label{thergoY}
Suppose that $g'(t)/g^2(t)$ converges to 0. Then,
with probability 1, the empirical measure $\mu_t=\frac{r}{r+t}\mu +
\frac{1}{r+t}\int_0^t \delta_{X_s}\,\mathrm{d}s$ converges weakly to
$\mu_\infty$. Moreover, the mean $\overline{\mu}_t = \frac{1}{r+t}
\int_0^t X_s\,\mathrm{d}s + \frac{r}{r+t} \overline{\mu}$ also converges
almost surely.
\end{theo}

\begin{pf}
We remind the reader that $g'(t)/g^2(t)$ converges to 0. We start by
proving that $\bar{\mu}_t$ converges a.s. Decompose the process
$\overline{\mu}_t = \overline{\mu}_t^1 + \overline{\mu}_t^2 +
\overline{\mu}_t^3$, where
\begin{eqnarray*}
\overline{\mu}_t^1 &=& \overline{\mu} + r(x-\overline{\mu}) H(t),\\
\overline{\mu}_t^2 &=& \bigl(H(t) - H(\infty)\bigr) \int_{0}^{t} (r+s)
\mathrm{e}^{cG(s)}\,\mathrm{d}B_{s},\\
\overline{\mu}_t^3 &=& \int_{0}^{t} (r+s)\mathrm{e}^{cG(s)} \bigl(H(\infty) - H(s)
\bigr)\,
\mathrm{d}B_{s}.
\end{eqnarray*}
The convergence of $H(t)$ obviously implies the convergence of
$\overline{\mu}_t^1$. The deterministic factor of~$\overline{\mu}_t^2$
is equivalent to $\frac{1}{cg(t)t^2} \mathrm{e}^{-cG(t)}$ and, due to Lemma
\ref{l:ipp}, the quadratic variation of the stochastic factor in
$\overline{\mu}_t^2$ is of the order of $\frac{t^2 \mathrm{e}^{2cG(t)}}{g(t)}$.
By Lemma \ref{l:ipp} and the law of the iterated logarithm (\cite{Lep},
Theorem 3), we have $\overline{\mu}_t^2
\mathop{\stackrel{\mathrm{a.s.}}{\longrightarrow}}\limits_{t\rightarrow\infty} 0$. Finally,
$\overline{\mu}_t^3$ is a $L^2$-bounded martingale and thus converges
a.s. Putting all the pieces together, we conclude that
$\overline{\mu}_t \mathop{\stackrel{\mathrm{a.s.}}{\longrightarrow} }\limits_{t\rightarrow\infty}
\overline{\mu} + H(\infty) r(x-\overline{\mu}) + \int_{0}^{\infty}
(r+s)\mathrm{e}^{cG(s)} (H(\infty) - H(s) )\,\mathrm{d}B_{s}$.

To show that $\mu_t$ converges, we first point out that the
deterministic factor of $X_t$ converges. Decompose the process $X$ into
three parts: $X_t = \overline{\mu}_\infty + \phi(t) U_t + \mathrm{o}(1),$ where
\begin{eqnarray*}
\overline{\mu}_\infty &:=& x + c r (\overline{\mu} -x) F(\infty)+
\int_0^\infty \bigl[1 - (r+s)c\mathrm{e}^{cG(s)}\bigl(F(\infty) -
F(s)\bigr) \bigr]\,\mathrm{d}B_s,\\
U_t &:=& \frac{\mathrm{e}^{-cG(t)}}{r+t}\int_0^t (r+s)\mathrm{e}^{cG(s)}\,\mathrm{d}B_s,\\
\phi(t) &:=& c(r+t) \bigl(F(\infty) - F(t)\bigr)\mathrm{e}^{cG(t)}.
\end{eqnarray*}
Again, we prove the result for the Fourier transform. We have the
following:
\[
\frac{1}{t} \int_0^t \mathrm{e}^{\mathrm{i}uX_s}\,\mathrm{d}s =
\frac{\mathrm{e}^{\mathrm{i}u(\overline{\mu}_\infty+\mathrm{o}(1))}}{t}\int_0^t
\mathrm{e}^{\mathrm{i}u\phi(s)U_s}\,
\mathrm{d}s.
\]
By Proposition \ref{ergodic}, $\phi(t)U_t$ is ergodic. So
$\frac{1}{t}\int_0^t \mathrm{e}^{\mathrm{i}u\phi(s)U_s}\,\mathrm{d}s$ converges
a.s.
\end{pf}

\begin{coro}
Suppose that $g'(t)/g^2(t)$ converges to 0 and $g(\infty) <\infty$.
Then the limit $\lim \mu_t = \mu_\infty$ is the Gaussian measure
$\mu_\infty = \mathcal{N}(\overline{\mu}_\infty,
\frac{1}{2g(\infty)c})$.
\end{coro}

%s3.3 ###
\subsection{Asymptotic behaviour of $X$}
We prove here that, depending on $g$, $X$ exhibits three different
behaviours: $X$ converges either almost surely, or in probability (and
not a.s.), or it diverges.
 First, we describe roughly the asymptotic behaviour of $X$.
\begin{propo} Suppose that
$g(\infty) <\infty$. Then we have
\[
\mathbb{P}\Bigl(\limsup_{t\rightarrow\infty} X_t =
+\infty\Bigr) = \mathbb{P}\Bigl(\liminf_{t\rightarrow \infty} X_t =
-\infty\Bigr)=1.
\]
\end{propo}
\begin{pf}
Let $A$ be a non-negligible subset of $\mathbb{R}$. We have the
asymptotic equivalence
\[
\int_0^t \delta_{X_s}(A)\,\mathrm{d}s
\mathop{\sim}_{t\rightarrow \infty} t l,
\]
where $l$ is a positive
constant depending on $A$. So, $\int_0^\infty \delta_{X_s}(A)\,
\mathrm{d}s = \infty$ a.s. and $\mu_\infty$ is diffusive. It then
implies that for all $K>0$, $\int_0^\infty \delta_{X_s}([K,\infty[)\,
\mathrm{d}s = \infty$ a.s. and so
\[
\mathbb{P}\biggl(\bigcap_{K\geq 1} \biggl\{\int_0^\infty \1_{\{X_s\geq K\}}\,
 \mathrm{d}s = \infty \biggr\}\biggr) = 1.
\]
 We conclude that $\mathbb{P}(\limsup_{t\rightarrow\infty}
X_t = + \infty) =1$. The proof is the same for $\liminf_{t\rightarrow
\infty}X_t$.
\end{pf}

\begin{propo}
Suppose that $g'(t)/g^2(t)$ converges to 0 and $g(\infty) =\infty$.
Then $X_{t}$ converges in probability to a random variable $X_\infty$
and a.s. $\mu_t$ converges weakly to $\delta_{X_\infty}$.
\end{propo}
\begin{pf}
As $Y$ is Gaussian and $\mathbb{E}(Y_t^2)= \mathrm{O} (g(t)^{-1})$, it converges
in $L^2$ and so in probability to 0. Decomposing $X$ as $X_t = Y_t +
\int_0^t Y_s \frac{\mathrm{d}s}{r+s}$, it remains to show that the
previous integral converges in probability. Using the explicit form of
$Y$, Fubini's theorem for stochastic integrals ensures
\[
\int_0^t Y_s
\frac{\mathrm{d}s}{r+s} = r(x-\bar{\mu})\int_0^t
\mathrm{e}^{-cG(s)}\frac{\mathrm{d}s}{(r+s)^2} + \int_0^t (r+u)\mathrm{e}^{cG(u)}
\biggl(\int_u^t \mathrm{e}^{-cG(s)}\frac{\mathrm{d}s}{(r+s)^2}\biggr)\,\mathrm{d}B_u.
\]
The
quadratic variation of the Brownian integral converges by Lemma
\ref{l:ipp} and thus $X$ converges to $X_\infty$ in $L^2$. Remark that
the law of the iterated logarithm does not imply here that $X$
converges a.s. since we do not know whether $\log G(t) /g(t)$ converges
to 0 or not. We then easily have that $\mu_t$ converges toward
$\delta_{X_\infty}$ in probability. By Theorem \ref{thergoY}, a.s.
$\mu_t$ converges (weakly) and so we conclude.
\end{pf}

\begin{propo}
Suppose that $g'(t)/g^2(t)$ converges to 0 and $g(t)^{-1} \log G(t)$ is
bounded for $t$ large enough. Then there exists $C>0$ such that
\[
\mathbb{P}\Bigl(\limsup_{t\rightarrow\infty}\vert Y_t\vert \leq C\Bigr) =1.
\]
\end{propo}
\begin{pf}
We write $Y$ as a Brownian (local) martingale: $Y_t =
\frac{\mathrm{e}^{-cG(t)}}{r+t}(Y_0 + \int_0^{t}
(r+s)\mathrm{e}^{cG(s)}\,
\mathrm{d}B_{s})$. To estimate the quadratic variation of $Y$, we use
Lemma \ref{l:ipp} and thus, by the law of the iterated logarithm, there
exists $C$ such that a.s. $\limsup_{t\rightarrow \infty} \vert Y_t\vert
\leq C$.
\end{pf}

\begin{coro}
Suppose that $g(t)^{-1} \log G(t)$ is lower bounded away from 0 and
upper bounded for $t$ large enough. Then $X_{t}$ is bounded a.s.,
converges in probability (but not a.s.) to $X_\infty =
\overline{\mu}_\infty$ and a.s. $\mu_t$ converges weakly to
$\delta_{X_\infty}$.
\end{coro}
\begin{pf}
$Y$ is a.s. bounded and $\overline{\mu}_t$ converges a.s., so $X_t =
Y_t + \overline{\mu}_t$ is also a.s. bounded. As $Y_t$ is Gaussian, it
converges (in law) to a centered Gaussian variable. The latter being
bounded, $Y_t$ converges in probability to 0. By the law of the
iterated logarithm, $Y_t$ does not converge a.s. to~0 (since $\log G(t)
/g(t)>0$ for large $t$). So, $X_t$ converges in probability to
$\overline{\mu}_\infty$. We conclude by uniqueness of the limit that
a.s. $\mu_t$ converges weakly to $\delta_{\overline{\mu}_\infty}$.
\end{pf}

\begin{propo}
Suppose that $g'(t)/g^2(t)$ converges to 0 and
$\lim_{t\rightarrow\infty} g(t)^{-1} \log G(t) =0$. Then the
process $Y_{t} := X_{t} - \overline{\mu}_{t}$ converges to $0$ a.s.
Moreover, both $ X_{t}$ and $\overline{\mu}_t$ converge to
$\overline{\mu}_{\infty}$ a.s. and a.s. $\mu_t$ converges weakly to
$\delta_{\overline{\mu}_{\infty}}$.
\end{propo}
\begin{pf}
We only need to prove that $Y_t := X_t - \overline{\mu}_t$ converges
a.s. to 0. We have already seen that $Y_{t} = \frac{\mathrm{e}^{-cG(t)}}{r+t}
\int_{0}^{t} (r+s)\mathrm{e}^{cG(s)}\,\mathrm{d}B_{s} + r(x-\overline{\mu})
\frac{\mathrm{e}^{-cG(t)}}{r+t} =: U_{t} + v_{t}$. The deterministic term $v_t$
converges obviously to $0$ and the law of the iterated logarithm
implies that $U_{t}$ converges a.s. to~$0$. By uniqueness of the limit
of $\mu_t$, we conclude that $\mu_\infty =
\delta_{\overline{\mu}_\infty}$.
\end{pf}

%s4 ###
\section{Study of $Y$ with respect to the critical points of $V$} \label{s:extrema}
From now on, we assume that $g'(t)/g^2(t)$ converges to 0 (this
hypothesis is only needed to study the behaviour of $Y$ near a local
minimum of $V$). We study the process $Y_t = X_t - \bar{\mu}_t$, which
is the solution to
%e4.1 ###
\begin{equation}\label{equYV}
\mathrm{d}Y_t = \mathrm{d}B_t - \biggl(g(t)\nabla
V(Y_t)+\frac{Y_t}{r+t}\biggr)\,
\mathrm{d}t;\qquad Y_0=x-\overline{\mu}.
\end{equation}
More precisely, we study the behaviour of $Y$ near the critical points
of $V$. We show in particular, for each local minimum of $V$, that $Y$
stays close to it with positive probability, whereas this probability
is zero for any unstable critical point.

%s4.1 ###
\subsection{Behaviour near the critical points of $V$}
\begin{propo} \label{propVcritique}
Almost surely, $\forall \varepsilon >0$, $\forall t>0$,
\[
T_{t}^{\varepsilon} := \inf \{ s \geq t ; d(Y_{s}, \mathit{Min} \cup \mathit{Max}) <
\varepsilon \} <\infty.
\]
\end{propo}
\begin{pf}
Let $\varepsilon > 0$. Applying It\^{o}'s formula to $x\mapsto V(x)$,
we obtain
\[
\mathrm{d}V(Y_{t})= (\nabla V(Y_{t}),\mathrm{d}B_{t}) - \biggl(g(t)\vert
\nabla V(Y_t) \vert^{2} + \frac{1}{r+t} (Y_t,\nabla V(Y_t))-
\frac{1}{2}\Delta V(Y_t) \biggr)\,\mathrm{d}t. \label{equdVYD}
\]
It then follows from the growth condition \eqref{growth} that on the
set $\{z ; d(z,\mathit{Min} \cup \mathit{Max}) > \varepsilon \}$ and for $t \geq 0$, the
function  $y \mapsto \frac{1}{r+t}(y,\nabla V(y)) + g(t)\vert \nabla
V(y)\vert^{2} - \frac{1}{2}\Delta V(y)$ is bounded from below. So there
exists $C>0$ such that, for $\forall y \in \{z;d(z,\mathit{Min} \cup \mathit{Max}) >
\varepsilon \}$, we have
%e4.2 ###
\begin{equation}\label{minD2}
g(t) \vert \nabla V(y)\vert^{2} + \frac{1}{r+t}(y,\nabla V(y))-
\frac{1}{2}\Delta V(y) \geq \biggl(g(t)-\frac{g(\infty)}{2}\biggr) \vert \nabla
V(y)\vert^{2} -C.
\end{equation}
Let us introduce the stopping time $T_{t}^{\varepsilon} = \inf\{s \geq
t ; d(Y_{s}, \mathit{Min} \cup \mathit{Max}) < \varepsilon \}$ and prove that
$\mathbb{P}(T_{t}^{\varepsilon} < + \infty)=1.$ It follows from
\eqref{minD2} that there exists $t_0$ such that, for $t>t_0$,
$(V(Y_{s\wedge {T_{t}^{\varepsilon}}}) + C (s\wedge
{T_{t}^{\varepsilon}}), s \geq t)$ and
\[
\biggl(V(Y_{s\wedge
{T_{t}^{\varepsilon}}} ) + C (s\wedge {T_{t}^{\varepsilon}}) +
\int_{0}^{s \wedge T_{t}^{\varepsilon}} \biggl( g(u) - \frac{1}{2} g(\infty)
\biggr) \vert\nabla V(Y_{u\wedge T_{t}^{\varepsilon}})\vert^{2}\,\mathrm{d}u,
s \geq t \biggr)
\]
are two super-martingales. As they are positive, they
converge a.s. (as $s\rightarrow \infty$). So, the process $(\int_{0}^{s
\wedge T_{t}^\varepsilon} g(u) \vert \nabla V(Y_{u\wedge
T_{t}^{\varepsilon}})\vert^{2}\,\mathrm{d}u, s \geq t )$ also converges
a.s. On the set $\{ T_{t}^{\varepsilon} = +\infty \}$, we have
\[
\vert \nabla V(Y_{s\wedge T_{t}^{\varepsilon}}) \vert^2
\mathop{\stackrel{\mathrm{a.s.}}{\longrightarrow}}_{s\rightarrow\infty}0.
\]
Thus
$Y_{s\wedge T_{t}^{\varepsilon}}$ gets close to $\mathit{Min} \cup \mathit{Max}$ and
there is a contradiction. Finally, $\mathbb{P}(T_{t}^{\varepsilon} < +
\infty )=1$ for all $t> t_0$. For $t\leq t_0$, we conclude since $t
\mapsto T_t^\varepsilon$ is increasing.
\end{pf}

\begin{coro} \label{propVcritique2}
Almost surely, the sequence of stopping times $T_n := \inf\{s >n ;
d(Y_{s},\mathit{Min}\cup \mathit{Max})< \varepsilon\}$ satisfies: $T_n\rightarrow
\infty$, and $\forall n \geq 1$, $\mathbb{P}(T_{n} <+\infty)=1$ and
$d(Y_{T_{n}},\mathit{Min}\cup \mathit{Max}) < \varepsilon .$
\end{coro}

%s4.2 ###
\subsection{Case of a stable critical point: Local minimum}
We will prove that if $Y_0$ is near a local minimum $m$, then the set
$\{Y_{s} ; s \geq 0\}$ stays near $m$ with a positive probability.
Indeed, a second-order Taylor expansion permits us to compare
$(y-m,\nabla V(y))$ with $\vert y-m\vert^{2}$ and we use a comparison
theorem. Let $m$ be a local minimum of $V$ such that $\nabla^2 V(m)
>0$. By Taylor's formula, there exist $a>0$ and $\varepsilon_{0} > 0$
such that for all $|y-m| \leq \varepsilon_{0}$ we have $(y-m,\nabla
V(y)) \geq a|y-m|^{2}.$ Without any loss of generality, we suppose
$m=0$ in the proofs.

\begin{propo} \label{propcvYai}
Suppose that $g(t)^{-1}\log G(t)$ is bounded on $\mathbb{R}_+$. Let
$\varepsilon_{0}> \varepsilon> 0$. Then, there exists a positive
stopping time $T_{0}$ such that for all $T > T_0$, we have on the event
$\{|Y_{T}-m| < \varepsilon\}$ that $\mathbb{P}(\forall s \geq 0; \vert
Y_{s+T}-m \vert < \varepsilon)>0$. Moreover, for any $T>T_0$, we have
on the event $\{\forall s \geq T; \vert Y_{s}-m \vert < \varepsilon
\}$:
\[
\vert Y_{t+T}- m \vert = \mathrm{O}\bigl(\sqrt{g(t+T)^{-1} \log G(t+T)}\bigr)\qquad
a.s.
\]
\end{propo}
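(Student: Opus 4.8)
The plan is to reduce the behaviour of $Y$ near $m$ to that of a one-dimensional confined diffusion and then to transport the estimates of the quadratic case of Section~2. I would work with the squared distance $R_t := |Y_t - m|^2$. Applying It\^o's formula to \eqref{equYV} and writing $(Y_t - m, \mathrm{d}B_t) = \sqrt{R_t}\,\mathrm{d}W_t$ for a standard one-dimensional Brownian motion $W$ (L\'evy's characterization), one gets
$$\mathrm{d}R_t = 2\sqrt{R_t}\,\mathrm{d}W_t + \Big(d - 2g(t)(Y_t - m, \nabla V(Y_t)) - \tfrac{2}{r+t}(Y_t - m, Y_t)\Big)\mathrm{d}t.$$
On the ball $\{|y-m|\le \varepsilon_0\}$ the Taylor estimate $(y-m,\nabla V(y)) \ge a|y-m|^2$ and the boundedness of $(Y_t-m,Y_t)$ bound the drift above by $d - 2ag(t)R_t + C/(r+t)$. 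Observe that the hypothesis that $\sqrt{g(t)^{-1}\log G(t)}$ be bounded, together with $\lim g>1/2$, forces $g(t)\to\infty$, so the mean-reversion strength $2ag(t)$ grows while the push $d$ stays fixed; this is what makes confinement near $m$ possible.

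Next I would fix $\varepsilon\in(0,\varepsilon_0)$, set $\tau := \inf\{s\ge T : |Y_s-m|\ge \varepsilon\}$, and compare $R$ on $[T,\tau)$ with the solution $\tilde R$ of the auxiliary one-dimensional SDE having the same diffusion coefficient $2\sqrt{\cdot}$ and drift $d + C/(r+s) - 2ag(s)\,\cdot$, started at $R_T$. As the diffusion coefficient is H\"older-$1/2$ and the auxiliary drift is Lipschitz in space, the classical one-dimensional comparison theorem gives $R_{t\wedge\tau}\le \tilde R_{t\wedge\tau}$; the process $\tilde R$ is, up to the lower-order $1/(r+s)$ term, the squared-distance process attached to the quadratic potential $\tfrac{a}{2}|y-m|^2$ treated in Section~2. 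For the first assertion I would prove $\mathbb{P}(\forall s\ge T:\ |Y_s-m|<\varepsilon)>0$ in two steps: first, by the support theorem (the drift is bounded on the ball), from $\{|Y_T-m|<\varepsilon\}$ there is positive probability that $Y$ reaches a much smaller ball $\{|Y_{T'}-m|<\varepsilon/k\}$ over a bounded time; second, starting deep inside, the nonnegative supermartingale $\Lambda(Y_{s\wedge\tau}) + \tfrac12\int_{T'}^{s\wedge\tau} g(u)|\nabla V(Y_u)|^2\mathrm{d}u$ with $\Lambda := V-V(m)\ge0$ (as in the proof of Proposition~\ref{propVcritique}, valid near $m$ for $T$ large) has, by the maximal inequality, exit probability at most $\Lambda(Y_{T'})/\inf_{|y-m|=\varepsilon}\Lambda(y)\le (A/a)\,k^{-2}<1$ once $k$ is chosen large. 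Combining the two steps yields positivity.

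For the quantitative rate I would work on the staying event, where the drift inequality holds for all $s\ge T$. Multiplying by the integrating factor $e^{2aG(t)}$ and integrating,
$$e^{2aG(t)}R_t \le e^{2aG(T)}R_T + \int_T^t e^{2aG(s)}\Big(d + \tfrac{C}{r+s}\Big)\mathrm{d}s + M_t,\qquad M_t := 2\int_T^t e^{2aG(s)}\sqrt{R_s}\,\mathrm{d}W_s.$$
The hypothesis $g'/g^2\to0$ gives $\int_T^t e^{2aG(s)}\mathrm{d}s \sim e^{2aG(t)}/(2ag(t))$, so the deterministic term contributes $O(g(t)^{-1})$. For the martingale, $\langle M\rangle_t = 4\int_T^t e^{4aG(s)}R_s\,\mathrm{d}s$, and the law of the iterated logarithm for martingales (as in Lemma~\ref{convmean}, via \cite{Lep}) gives $M_t = O(\sqrt{\langle M\rangle_t\,\log\log\langle M\rangle_t})$ with $\log\log\langle M\rangle_t\asymp \log G(t)$. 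This is exactly the mechanism that produced the exponent in the quadratic case, and after dividing by $e^{2aG(t)}$ it yields $R_t = O(g(t)^{-1}\log G(t))$, i.e. $|Y_{t+T}-m| = O(\sqrt{g(t+T)^{-1}\log G(t+T)})$ almost surely.

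The main obstacle is the rate. Because the diffusion coefficient $2\sqrt{R}$ is nonlinear, the a priori bound $R_s<\varepsilon^2$ only gives $\langle M\rangle_t = O(e^{4aG(t)}/g(t))$, hence the suboptimal $|Y-m|=O\big((g^{-1}\log G)^{1/4}\big)$; one must feed the improved bound back into $\langle M\rangle_t$ and iterate, the self-improving map $\theta\mapsto(\theta+1)/2$ converging to the sharp exponent $1$, which is then closed by checking that $R_s\le C\,g(s)^{-1}\log G(s)$ is self-consistent. (This nonlinearity is exactly why the rate was immediate in the linear quadratic case of Section~2.) Secondary difficulties are that the comparison is valid only up to $\tau$, so positivity and the a.s. rate must be argued jointly on the staying event, and that in dimension $d\ge2$ the radial representation carries a Bessel-type term $\tfrac{d-1}{2|Y-m|}$; both affect constants but not the orders of magnitude above.
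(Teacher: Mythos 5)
Your overall strategy (reduce to a one-dimensional confined diffusion and import the quadratic-case estimates) is in the right spirit, but two steps do not work as written. The more serious one is the positivity argument. The process $\Lambda(Y_{s\wedge\tau}) + \tfrac12\int g(u)|\nabla V(Y_u)|^2\,\mathrm{d}u$ is \emph{not} a supermartingale near $m$: its drift is $-\tfrac{g}{2}|\nabla V(Y)|^2 - \tfrac{1}{r+t}(Y,\nabla V(Y)) + \tfrac12\Delta V(Y)$, and at $y=m$ this equals $\tfrac12\Delta V(m)>0$ since $m$ is a nondegenerate minimum. The inequality \eqref{minD2} from Proposition \ref{propVcritique} that you invoke is established only on $\{z\,;\,d(z,Min\cup Max)>\varepsilon\}$, i.e.\ precisely away from the region you are working in; near $m$ the sign flips because $|\nabla V|^2\to0$ while $\Delta V$ stays bounded away from $0$. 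Compensating by $\tfrac12\int\Delta V(Y_u)\,\mathrm{d}u$ does not rescue the maximal inequality, since that integral grows linearly on the staying event. Confinement near $m$ is produced by the balance between the constant Laplacian push outward and the restoring drift $-g(t)\nabla V$ whose strength grows with $g$; a maximal inequality for $V(Y)$ cannot see this balance. The paper obtains positivity instead from explicit comparison processes for $|Y-m|$ itself --- a reflected Ornstein--Uhlenbeck process $U_t=e^{-aG(t+T)}W^+_{\alpha^{-1}(t)}$ in dimension one (via Skorokhod's lemma) and a time-changed Bessel process in dimension $d\ge2$ --- whose suprema over $[T,\infty)$ are controlled by $\sup_t\sqrt{g(t)^{-1}\log G(t)}$, which is bounded by hypothesis.

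The second problem is the rate. By squaring, you give yourself the multiplicative noise $2\sqrt{R_t}\,\mathrm{d}W_t$, and as you acknowledge the first pass only yields the exponent $1/4$; the bootstrap $\theta\mapsto(\theta+1)/2$ reaches $1$ only in the limit, and you neither control the constants along the iteration nor actually close the self-consistency check --- this is exactly where the difficulty is concentrated, and it is left as a sketch. The paper's proof avoids the issue entirely by applying It\^o's formula to the norm $|Y_{t+T}-m|$ (with the Bessel correction $\tfrac{d-1}{2|Y-m|}$ when $d\ge2$), comparing with the \emph{linear} SDE $\mathrm{d}U_t=\mathrm{d}\beta_t-ag(t+T)U_t\,\mathrm{d}t+\tfrac{d-1}{2U_t}\,\mathrm{d}t$, which is solved explicitly as a deterministic time change of a (reflected or Bessel) Brownian motion; the law of the iterated logarithm then gives $U_t=O\bigl(\sqrt{g(t+T)^{-1}\log G(t+T)}\bigr)$ in one step. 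I would recommend abandoning $R=|Y-m|^2$ in favour of the norm: the comparison theorem you cite applies just as well there, and both gaps disappear.
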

\begin{pf}
Consider the time-shifted process $\widetilde{Y_{t}} := Y_{t+T}$. Let
$\varepsilon>0$. We will construct a one-dimensional process $U$ such
that for all $t\ge 0$, we have a.s. $|\widetilde{Y}_t| \le U_t$.

(1) Suppose that $d=1$. As $V{''}(m)> 0$, there exists $a>0$ such that
for all $|y| \le \varepsilon$, $yV{'}(y) \geq ay^2$. Introduce the
non-negative process $U$, a unique solution to the SDE
%e4.3 ###
\begin{equation} \label{equU}
\mathrm{d}U_{t} = \sign\!{(\widetilde{Y}_t)}\,\mathrm{d}B_{t}^{T} -
ag(t+T)U_{t}\,\mathrm{d}t + \mathrm{d}L_{t},\qquad U_0=|\widetilde{Y}_0|,
\end{equation}
where $L$ is the local time of $U$ in 0. Let $\alpha(t)$ be the
function such that $\int_{0}^{\alpha(t)} \mathrm{e}^{2aG(s+T)}\,\mathrm{d}s = t $
and $\alpha(0)=0$. Then, the process $A_{t}:= \int_{0}^{\alpha(t)}
\mathrm{e}^{aG(s+T)}\,\mathrm{d}L_{s}$ is the local time in zero of
$W_t=\int_0^{\alpha(t)} \mathrm{e}^{aG(s+T)}\sign{(\widetilde{Y}_s)}\,
\mathrm{d}B_s^T$. Denote by $W^{+}$ the reflected Brownian motion
associated to $W$. Skorokhod's lemma (see \cite{HaSh}) then entails
that $\mathrm{e}^{aG(\alpha(t)+T)} U_{\alpha(t)} = W_t^+$. So the (strong)
solution to \eqref{equU} is
%e4.4 ###
\begin{equation}
U_{t}= U_0+\mathrm{e}^{-aG(t+T)} W_{\alpha^{-1}(t)}^{+}. \label{defU}
\end{equation}
By a martingale comparison theorem, we prove that $|\widetilde{Y}_t|
\leq U_t$ a.s. Indeed, let $l$ be a function of class $\mathcal{C}^{2}$
such that $ \forall x > 0\dvtx l(x)> 0$ and $l{'}(x) > 0$, and $\forall x
\leq 0\dvtx l(x)=0.$ We apply It\^{o}'s formula to
$l(|\widetilde{Y}_{t}|-U_{t})$ to show that, on the event $\{
|\widetilde{Y}_{s}| > U_{s}\}$, we have $l(|\widetilde{Y}_{t}|-U_{t})
\leq 0 $ a.s. Finally, as $\alpha^{-1}(t)= \int_0^t
\mathrm{e}^{2aG(s+T)}\,
\mathrm{d}s$, we conclude by the law of the iterated logarithm (LIL),
that a.s. $U_{t}=\mathrm{O}( \sqrt{g(t+T)^{-1} \log G(t+T)})$.

(2) Suppose that $d\ge 2$. Define $\tau := \inf \{t>0;
\widetilde{Y}_t=0\}$. It\^{o}'s formula implies
%e4.5 ###
\[
\mathrm{d}|\widetilde{Y}_{t\wedge \tau}|= \mathrm{d}W_{t\wedge \tau} -
g(t\wedge\tau+T)\biggl( \frac{\widetilde{Y}_{t\wedge
\tau}}{|\widetilde{Y}_{t\wedge \tau}|},\nabla V(\widetilde{Y}_{t\wedge
\tau})\biggr)\,\mathrm{d}t - \frac{|\widetilde{Y}_{t\wedge \tau}|}{r+t\wedge
\tau+T}\,\mathrm{d}t + \frac{d-1}{2|\widetilde{Y}_{t\wedge
\tau}|}\,\mathrm{d}t, \label{equnormeY}
\]
where $W_t=\int_{0}^t
(\frac{\widetilde{Y}_s}{\vphantom{^{''}}|\widetilde{Y}_s|},\mathrm{d}B_s^T )$ is a
standard Brownian motion. The condition $\nabla^2V(0)> 0$ implies that
there exists $a>0$ such that
%e4.6 ###
\begin{equation}
\forall |y| \le \varepsilon\qquad (y,\nabla V(y)) \geq a |y|^2.
\end{equation}
Let us introduce the $(d-1)$-dimensional Bessel process $R$. Consider
the time-shifted process $U_t:=\mathrm{e}^{-aG(t+T)} R_{\int_{0}^t
\mathrm{e}^{2aG(s+T)}\,\mathrm{d}s}$, which is the non-negative (strong) solution
to
%e4.7 ###
\begin{equation}\label{equUbessel}
\mathrm{d}U_t= \mathrm{d}\beta_t^T -ag(t+T)U_t\,\mathrm{d}t +
\frac{d-1}{2U_t}\,\mathrm{d}t,
\end{equation}
where $\beta$ is a Brownian motion. On the event $\{\forall s \geq T;
\vert Y_{s} \vert < \varepsilon \}$, we apply the previous comparison
theorem to obtain a.s. $|\widetilde{Y_t}| \leq U_t$. On the other hand,
$R$ is the radial part of a $d$-dimensional Brownian motion. So the LIL
implies a.s. $R_t =\mathrm{O}(\sqrt{(t+T)\log\log(t+T)})$ and $U_t =
\mathrm{O}(\sqrt{g(t+T)^{-1}\log G(t+T)})$.

Now we prove that $\mathbb{P}(\forall s \geq T; \vert Y_{s}-m \vert <
\varepsilon ) >0$. Let $\tau_T :=\inf \{s>T; |Y_{s} -m|>\varepsilon\}$.
For all $T<t<\tau_T$, we have a.s. $|Y_{t} -m| \leq U_t$. By the LIL
applied to $U$, we conclude that, for $T$ large enough,
$\mathbb{P}(\sup_{s\geq T}U_s <\varepsilon)
>0$ and finally $\mathbb{P}(\tau_T = \infty)>0$.
\end{pf}

\begin{coro}
Suppose that $g(t)^{-1}\log G(t)$ converges to 0 when $t$ tends to
infinity. Then there exists $T_0>0$ such that for all $T>T_0$, the
process $Y_t$ converges almost surely to $m$ on the event $\{\forall s
\geq T;\vert Y_{s}-m \vert < \varepsilon \}$.
\end{coro}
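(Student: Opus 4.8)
The plan is to read this off directly from Proposition \ref{propcvYai}, the only new input being the strengthening of the boundedness hypothesis on $\sqrt{g(t)^{-1}\log G(t)}$ into a convergence-to-zero hypothesis. First I would check that the hypotheses of the present corollary entail those of Proposition \ref{propcvYai}. The conditions $\lim g(t)>1/2$ and $g'(t)/g(t)^2\to 0$ are common to both statements. The remaining assumption $\log G(t)/g(t)\to 0$ is strictly stronger than the boundedness of $\sqrt{g(t)^{-1}\log G(t)}$ required by the proposition: a continuous function that tends to $0$ at infinity is automatically bounded on $\mathbb{R}_+$ (being bounded on any compact initial segment as well). Hence the proposition applies and yields the very same threshold $T_0$, so that for every $T>T_0$ the event $\{\forall s\geq T;\ |Y_s-m|<\varepsilon\}$ has positive probability and the pathwise estimate below holds on it.

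Second, I would invoke the sharp bound obtained at the end of the proof of Proposition \ref{propcvYai}: on the event $\{\forall s\geq T;\ |Y_s-m|<\varepsilon\}$ one has, almost surely,
\[
|Y_{t+T}-m| = O\!\left(\sqrt{g(t+T)^{-1}\log G(t+T)}\right).
\]
This is exactly the quantity controlled by the comparison with the (reflected/Bessel) process $U$ and the law of the iterated logarithm carried out there, so no further work on the dynamics of $Y$ is needed.

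Finally I would conclude by letting $t\to\infty$. Under the strengthened hypothesis $\log G(t)/g(t)\to 0$ we have $g(t+T)^{-1}\log G(t+T)\to 0$, hence $\sqrt{g(t+T)^{-1}\log G(t+T)}\to 0$, and the displayed estimate forces $|Y_{t+T}-m|\to 0$ almost surely on the event in question. Since $T$ is fixed, this is precisely the statement that $Y_t-m$ converges almost surely to $0$ on $\{\forall s\geq T;\ |Y_s-m|<\varepsilon\}$. I do not expect any genuine obstacle here: the entire difficulty—constructing the dominating process, verifying the comparison inequality $|\widetilde{Y_t}|\leq U_t$, and extracting the rate from the law of the iterated logarithm—was already discharged in Proposition \ref{propcvYai}. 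The corollary is merely the observation that, once the majorizing rate is known to tend to zero rather than just stay bounded, almost sure convergence to the minimum follows immediately.
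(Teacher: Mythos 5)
Your argument is correct and is essentially the paper's own proof: the authors likewise reduce the corollary to the estimate $|Y_{t+T}-m|\leq U_t+|V_t|=O\bigl(\sqrt{g(t+T)^{-1}\log G(t+T)}\bigr)$ established in Proposition \ref{propcvYai} (their equation \eqref{eq:UV}) and conclude by letting $t\to\infty$ under the strengthened hypothesis $\log G(t)/g(t)\to 0$. Your additional remark that convergence to zero implies the boundedness required by the proposition is a harmless (and correct) bookkeeping step.
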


%s4.3 ###
\subsection{Case of an unstable critical point: Local maximum or saddle point}

If $M$ is a local maximum of $V$, then as $ \Delta V(M) < 0$,
$\varepsilon_{1} := \sup \{ \varepsilon ; \forall \vert y \vert <
\varepsilon,  \Delta V(M+y) < 0 \}$ exists and is finite.

If $M$ is a saddle point of $V$, then as $\nabla^2 V$ admits
a negative eigenvalue in $M$, there exists an unstable direction $e$
associated with $M$. Let $P_e \dvtx \mathbb{R}^d \mapsto \mathbb{R}e$ be
the projection on $\mathbb{R}e$. The amount
$\varepsilon_2:=\sup\{\varepsilon ;  \forall |y|<\varepsilon
,\partial_{ee}^2 V(M+y) <0 \mbox{ and }
(\partial_eV(P_e(y)),\partial_eV(y)) > 0 \}$ exists and is finite.

\begin{propo}\label{maxY}
Let $M$ be an unstable critical point of $V$. If $M$ is a local
maximum, suppose that $0< \varepsilon < \varepsilon_1$.
 If $M$ is a saddle point, suppose that $0< \varepsilon < \varepsilon_{2}$.

Let $T$ be a positive stopping time such that $\vert Y_{T}-M\vert <
\varepsilon$. Then
\[
\mathbb{P}( \forall s \geq T;\vert Y_{s} - M\vert
< \varepsilon ) = 0.
\]
\end{propo}
\begin{pf}
Note that $T<\infty$ a.s. by Proposition \ref{propVcritique}. Suppose
that $M$ is a local maximum and $M=0$, because the method of the proof
is the same for $M\neq 0$ (in that case, we have an additional term
$M\log (t+T)$). Let $D(t,Y_t)$ be the drift term of $V(Y_t)$. On the
event $A:=\{ \forall s \geq T;\vert Y_{s}\vert < \varepsilon \}$, we
obtain
\[
D(t+T,Y_{t+T}) = g(t+T) \vert \nabla V(Y_{t+T}) \vert^{2} +
\frac{(Y_{t+T},\nabla V(Y_{t+T}))}{r+t+T} - \frac{1}{2} \Delta
V(Y_{t+T})
 \geq  \frac{C_{1}}{t+T} + C_{2},
\]
where $C_{1}=\frac{1}{2}\inf \{(y,\nabla V(y)); \vert y\vert <
\varepsilon\}$ and $C_{2}= -\frac{1}{2}\sup \{\Delta V(y); \vert y
\vert < \varepsilon\} >0$. We thus find for~$t$ large enough that
$D(t+T,Y_{t+T})\geq C>0$ and so
%e4.8 ###
\begin{equation}\label{equVmax}
\mathbb{E}( V(Y_{t+T})\1_{A}) \leq \mathbb{E}(V(Y_{T})\1_A ) -
Ct\mathbb{P}(A) + \mathrm{o}(t).
\end{equation}
Finally, this last inequality is impossible (since $V$ is positive)
unless $\mathbb{P}(A) =0$.

Suppose now that $M$ is a saddle point. We apply It\^{o}'s formula to
$x\mapsto V(P_e(x))$ and follow the previous computation with $C_1
=\frac{1}{2}\inf \{(P_e(y),\partial_eV(P_e(y))); \vert y\vert
 < \varepsilon\}$ and $C_{2} = -\frac{1}{2} \sup \{\partial_{ee}^2 V(P_e(y));
 \vert y \vert < \varepsilon\} >0$.
\end{pf}

%s5 ###
\section{Asymptotics} \label{s:X}
Throughout this section, we always suppose that $g(\infty) = +\infty$
and $g'(t)/g^2(t)$ converges to 0, even if we do not remind the reader
in the statements of the results. In particular, it implies that for
all $T>0$, $G^{-1}(t+T) - G^{-1}(t)$ goes to 0 when $t$ tends to
infinity.
%s5.1 ###
\subsection{Ergodicity}\label{ss:ergoY}

\begin{lemma} \label{lemYborneL2}
The process $Y$ is bounded in $L^{2}$.
\end{lemma}
\begin{pf}
We show a stronger result: $\mathbb{E}V(Y_t)$ is bounded.
 For all $n\in \mathbb{N}$, define the
stopping time $\tau_n=\inf\{t\ge 0; |Y_t|>n\}$. Then there exists $C>0$
such that we have by localization:
\[
\mathbb{E} V(Y_{t\wedge \tau_n} ) \leq
\mathbb{E} V(Y_0) + \mathrm{e}^{Ct} <\infty.
\]
Let $n$ go to infinity and use
Fatou's lemma to find, for all $t\geq 0$, that $V(Y_t) \in L^1$. For
$t$ large enough, we have that $-g(t) V(x) + aV(x) \le -\frac{1}{2}
g(t) V(x)$. So, as $W$ is $c$-strictly convex and by the growth
hypothesis \eqref{growth}, the following holds for $t$ large enough:
\[
\frac{\mathrm{d}}{\mathrm{d}t} \mathbb{E}V(Y_t)
\leq - \frac{1}{2} g(t) \mathbb{E}V(Y_t) .
\]
Now, solving $\dot{u} = -\frac{1}{2} g(t) u$ leads to $\mathbb{E} V(Y_t) = \mathrm{O}(1)$.
\end{pf}
In order to obtain the ergodic result for $Y$, we introduce a dynamical
system $\phi$, whose asymptotics are close to $Y$ (see \cite{coursdea}
for more details):
\begin{defn}
The process $Y$ is an asymptotic pseudotrajectory for the flow $\phi$
if
%e5.1 ###
\begin{equation}\label{tightp}
\forall T>0,\forall \alpha>0\qquad  \lim_{t\rightarrow +\infty}
\mathbb{P}\Bigl( \sup_{0\leq h\leq T} |Y_{t+h} - \phi_{h}(Y_t)| \geq
\alpha\Bigr)= 0.
\end{equation}
\end{defn}

\begin{propo}\label{pta}
Let $\phi\dvtx \mathbb{R}_+\times \mathbb{R}^d\rightarrow \mathbb{R}^d$ be
the flow generated by
%e5.2 ###
\begin{equation}\label{flow}
\frac{\mathrm{d}}{\mathrm{d}t}\phi_t(x) = - \nabla V(\phi_{t}(x));\qquad
\phi_0(x)=x.
\end{equation}
Then $(Y_{G^{-1}(t)},t\ge 0)$ is an asymptotic pseudotrajectory for
$\phi$.
\end{propo}
\begin{pf}
Let $\widetilde{Y}_t=Y_{G^{-1}(t)}$ and
$\widetilde{B}_t=B_{G^{-1}(t)}$. We will use Markov's inequality\vspace*{1pt} and
then prove that $\lim_{t\rightarrow\infty} \mathbb{E}(\sup_{0\leq h\leq
T} |\widetilde{Y}_{t+h} - \phi_h(\widetilde{Y}_{t})| )=0.$

Define $\kappa(t):= (r+G^{-1}(t))g(G^{-1}(t)) $. A simple computation
yields to
\[
\widetilde{Y}_{t+h} - \phi_h(\widetilde{Y}_{t}) =
\widetilde{B}_{t+h}-\widetilde{B}_{t} + \int_{0}^{h} \bigl( \nabla
V(\phi_s(\widetilde{Y}_{t})) - \nabla V (\widetilde{Y}_{t+s}) \bigr)\,
\mathrm{d}s - \int_0^h \widetilde{Y}_{t+s}
\frac{\mathrm{d}s}{\kappa(t+s)}.
\]
Applying It\^{o}'s formula to $h\mapsto \mathrm{e}^{-2\tilde{C}h}\vert
\widetilde{Y}_{t+h} - \phi_h(\widetilde{Y}_{t})\vert^2$, we have:
\begin{eqnarray*}
&&\frac{1}{2} \mathrm{e}^{2\tilde{C}h}\,\mathrm{d}\bigl(\mathrm{e}^{-2\tilde{C}h}\vert
\widetilde{Y}_{t+h} - \phi_h(\widetilde{Y}_{t})\vert^2\bigr)\\
&&\quad = \bigl(\widetilde{Y}_{t+h} - \phi_h(\widetilde{Y}_{t}),
\mathrm{d}\widetilde{B}_{t+h}\bigr) +\frac{(\widetilde{Y}_{t+h} -
\phi_h(\widetilde{Y}_{t}), \widetilde{Y}_{t+h} )}{\kappa(t+h)}\,
\mathrm{d}h \\
&&\qquad{}-\tilde{C}\vert \widetilde{Y}_{t+h}
-\phi_h(\widetilde{Y}_{t})\vert^2\,\mathrm{d}h + \bigl(\widetilde{Y}_{t+h} -
\phi_h(\widetilde{Y}_{t}), \nabla V(\phi_h(\widetilde{Y}_{t}))- \nabla
V (\widetilde{Y}_{t+h}) \bigr)\,\mathrm{d}h\\
&&\qquad{} +
\frac{\mathrm{e}^{2\tilde{C}h}}{2g({G^{-1}(t+h)})}\,\mathrm{d}h.
\end{eqnarray*}
First, we notice that $(G^{-1}(t))'=1/g(G^{-1}(t))$. By the convexity
assumption on $V$, we also remark that
\[
-\tilde{C}\vert
\widetilde{Y}_{t+h} -\phi_h(\widetilde{Y}_{t})\vert^2 +
\bigl(\widetilde{Y}_{t+h} - \phi_h(\widetilde{Y}_{t}), \nabla
V(\phi_h(\widetilde{Y}_{t}))- \nabla V (\widetilde{Y}_{t+h}) \bigr)\le 0,
\]
and so we deduce the following upper bound for all $0\le h\le T$:
%e5.4 ###
%e5.3 ###
\begin{eqnarray}\label{eq:pta}
&&\frac{1}{2} \vert \widetilde{Y}_{t+h} -
\phi_h(\widetilde{Y}_{t})\vert^2\nonumber\\
 &&\quad\leq \mathrm{e}^{2\tilde{C}h}\int_0^h
\mathrm{e}^{-2\tilde{C}s}\bigl(\widetilde{Y}_{t+s} -
\phi_s(\widetilde{Y}_{t}),\mathrm{d}\widetilde{B}_{t+s}\bigr)
+ \frac{\mathrm{e}^{2\tilde{C}T}}{2} \bigl( G^{-1}(t+T) - G^{-1}(t)
\bigr)\\
&&\qquad{}+ \mathrm{e}^{2\tilde{C}h}\int_0^h \bigl(\widetilde{Y}_{t+s} -
\phi_s(\widetilde{Y}_{t}), \widetilde{Y}_{t+s} \bigr)
\frac{\mathrm{d}s}{\kappa(t+s)}.\nonumber
\end{eqnarray}
To conclude, we will now find an upper bound for each right-hand term
of \eqref{eq:pta}. By the Burkholder--Davis--Gundy (BDG) inequality for
the local martingale $\int_0^h \mathrm{e}^{-2\tilde{C}s}(\widetilde{Y}_{t+s} -
\phi_s(\widetilde{Y}_{t}), \mathrm{d} \widetilde{B}_{t+s})$ and a rough
upper bound for its quadratic variation, there exists $\alpha>0$ such
that:
\begin{eqnarray*}
&&\mathbb{E} \biggl( \sup_{0\leq h\leq T} \int_0^h \mathrm{e}^{-2\tilde{C}s}\bigl(\widetilde{Y}_{t+s} -
\phi_s(\widetilde{Y}_{t}),\mathrm{d}\widetilde{B}_{t+s}\bigr) \biggr)\\
&&\quad \le
\alpha\bigl(G^{-1}(t+T) - G^{-1}(t)\bigr) \mathbb{E} \Bigl( \sup_{0\leq h\leq T} \vert
\widetilde{Y}_{t+h} - \phi_h(\widetilde{Y}_{t})\vert^2
\Bigr)^{1/2}.
\end{eqnarray*}
We now estimate the remaining term of \eqref{eq:pta}
by the triangle inequality. As $\kappa$ is non-decreasing, we have the
following bound by Lemma \ref{lemYborneL2}:
\[
\mathbb{E} \int_{0}^T \biggl( \frac{\vert \widetilde{Y}_{t+s}
\vert^{2}}{\kappa(t+s)} +  \frac{\vert \widetilde{Y}_{t+s} - \phi_s
(\widetilde{Y}_{t}) \vert^{2}}{\kappa(t+s)}\biggr)\, \mathrm{d}s  \leq \frac{M
T}{\kappa(t)} + \frac{T}{\kappa(t)}\mathbb{E} \Bigl( \sup_{0\leq h\leq T}
\vert \widetilde{Y}_{t+h} - \phi_h(\widetilde{Y}_{t}) \vert^2 \Bigr).
\]
So we obtain for $t$ large enough:
\[
\mathbb{E} \Bigl( \sup_{0\leq h\leq T}\vert \widetilde{Y}_{t+h} -
\phi_h(\widetilde{Y}_{t})\vert^2 \Bigr) \leq  4\mathrm{e}^{4\tilde{C}T} \bigl(G^{-1}(t+T)
- G^{-1}(t)\bigr) + 4M\mathrm{e}^{2\tilde{C}T}\frac{T}{\kappa(t)},
\]
and the result follows since $G^{-1}(t+T) - G^{-1}(t)$ and
$1/\kappa(t)$ converge to 0.
\end{pf}

\begin{lemma}\label{tightm}
Suppose that for all $T>0$, $G^{-1}(t+T) - G^{-1}(t)$ vanishes when $t$
tends to infinity. Then $(\mu_t^{G^{-1}} :=\frac{1}{t}\int_0^t
\delta_{Y_{G^{-1}(s)}}\,\mathrm{d}s,t\geq 0)$ is a tight family of
measures.
\end{lemma}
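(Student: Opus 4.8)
The plan is to use the potential $V$ itself as a tightness (Lyapunov) function. First I would record that the sublevel sets of $V$ are compact: since $V=W+\chi$ with $\nabla^2 W\geq c\,\mathrm{Id}$ and $\chi$ compactly supported, one has $V(x)\geq \frac{c}{2}|x|^2+O(|x|)\to\infty$ as $|x|\to\infty$ outside the support of $\chi$, so for every $R>0$ the set $K_R:=\{x\in\mathbb{R}^d; V(x)\leq R\}$ is closed and bounded, hence compact. Thus controlling the average of $V$ along the trajectory will immediately control the mass that the empirical measures put far away from the origin.

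Next I would transfer the uniform moment bound already available to me. By Lemma \ref{lemYborneL2}, $\mathbb{E}V(Y_u)$ is bounded uniformly in $u$, say $\sup_{u\geq 0}\mathbb{E}V(Y_u)=:C_0<\infty$. Since the standing hypothesis (which is implied by $\lim g=\infty$) gives $G(\infty)=\infty$, the time change $u=G^{-1}(s)$ sweeps out all of $[0,\infty)$, and therefore $\sup_{s\geq 0}\mathbb{E}V(Y_{G^{-1}(s)})\leq C_0$ as well. Because $V\geq 0$, Fubini's theorem then yields, uniformly in $t$, the key estimate $\mathbb{E}\bigl[\int_{\mathbb{R}^d} V\, \mathrm{d}\mu_t^{G^{-1}}\bigr]=\frac{1}{t}\int_0^t \mathbb{E}V(Y_{G^{-1}(s)})\,\mathrm{d}s\leq C_0$.

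From here the conclusion is routine via Markov's inequality applied twice. Fix $\varepsilon>0$ and choose $R=R(\varepsilon)$ so large that $C_0/R\leq\varepsilon^2$; then $\mathbb{E}[\mu_t^{G^{-1}}(K_R^c)]=\mathbb{E}[\mu_t^{G^{-1}}(\{V>R\})]\leq \frac{1}{R}\,\mathbb{E}\bigl[\int V\,\mathrm{d}\mu_t^{G^{-1}}\bigr]\leq C_0/R\leq\varepsilon^2$, and a second application of Markov's inequality gives $\mathbb{P}\bigl(\mu_t^{G^{-1}}(K_R^c)>\varepsilon\bigr)\leq \varepsilon^{-1}\mathbb{E}[\mu_t^{G^{-1}}(K_R^c)]\leq \varepsilon$, uniformly in $t$. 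Since $K_R$ is compact, this is exactly tightness of the family $(\mu_t^{G^{-1}})_{t\geq 0}$. The only genuinely substantive ingredient is the uniform-in-time bound on $\mathbb{E}V(Y_t)$ borrowed from Lemma \ref{lemYborneL2}, together with the observation that it survives the time change; everything else is soft. The main obstacle to watch is the mode of tightness: the argument above delivers tightness of the expected (equivalently, in-probability) family, which is what the asymptotic-pseudotrajectory-in-probability framework requires; upgrading it to almost sure tightness would additionally demand controlling the fluctuations of $\frac{1}{t}\int_0^t V(Y_{G^{-1}(s)})\,\mathrm{d}s$ about its mean uniformly in $t$, and I would avoid that unless the later sections genuinely need the stronger statement.
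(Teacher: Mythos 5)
Your argument proves a weaker statement than the one the paper needs. From $\sup_t \mathbb{E}\bigl[\mu_t^{G^{-1}}(V)\bigr]\leq C_0$ and two applications of Markov's inequality you get: for every $\varepsilon>0$ there is a compact $K$ with $\sup_t \mathbb{P}\bigl(\mu_t^{G^{-1}}(K^c)>\varepsilon\bigr)\leq\varepsilon$. That is tightness in probability. The paper's proof instead establishes the pathwise bound: almost surely $\varphi(t):=\int_0^t V(Y_{G^{-1}(s)})\,\mathrm{d}s=O(t)$, i.e. $\limsup_t \mu_t^{G^{-1}}(V)<\infty$ a.s.\ (with a random constant), which is what makes the random family $(\mu_t^{G^{-1}}(\omega))_{t\geq 0}$ tight for a.e.\ fixed $\omega$. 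The distinction is not cosmetic: in Theorem \ref{th:ergo} the lemma is used to guarantee that, almost surely, every weak$^*$ limit point of the empirical measure is a genuine probability measure (so that $\sum a_i=1$), i.e.\ that no mass escapes to infinity along any subsequence for a.e.\ $\omega$. A uniform-in-$t$ bound on $\mathbb{P}(\mu_t(K^c)>\varepsilon)$ does not rule out that for a.e.\ $\omega$ one has $\limsup_t \mu_t^{G^{-1}}(K^c)(\omega)$ large for every compact $K$, since the exceptional times may depend on $\omega$; Fatou only gives $\liminf_t\mu_t^{G^{-1}}(V)<\infty$ a.s., hence control of some subsequence but not of all limit points. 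So the ``upgrade'' you set aside in your last sentence is precisely the content of the lemma. The paper obtains it by applying It\^o's formula to $V(Y_{G^{-1}(t)})$, handling the martingale term via the strong law of large numbers for local martingales (or $L^2$-convergence on the complementary event), and bootstrapping the resulting inequality $\int_0^t|\nabla V(Y_{G^{-1}(s)})|^2\,\mathrm{d}s\leq O(t)+a\varphi(t)$ against the growth bound $V\leq k_\varepsilon+\varepsilon|\nabla V|^2$ to conclude $\varphi(t)=O(t)$ a.s.

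A secondary point: your route imports the hypotheses of Lemma \ref{lemYborneL2} ($\lim g(t)=\infty$ and $g'/g^2\to 0$), which are not among the stated hypotheses of this lemma (it assumes only that $G^{-1}(t+T)-G^{-1}(t)\to 0$ for all $T$). In the setting of Theorem \ref{th:ergo} these extra assumptions are in force, so this is minor, but the paper's argument is pathwise and does not pass through the moment bound at all. Your observation that sublevel sets of $V$ are compact is correct and is also implicitly used by the paper; that part of your proposal is fine.
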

\begin{pf}
It is enough to show that a.s. $\varphi(t) := \int_0^t
V(Y_{G^{-1}(s)})\,
\mathrm{d}s = \mathrm{O}(t)$. Let $A>0$ and $K$ be a compact set such that
$\forall x \in K^c$, $V(x) \geq A$. Then $\mu_t^{G^{-1}}(V) \geq A
\mu_t^{G^{-1}}(\1_{K^c})$.
 From the growth assumption \eqref{growth},
there exists $a>0$ and for all $\varepsilon>0$, there exists
$k_{\varepsilon}$ such that $ \Delta V \leq  aV$ and $V \leq
k_\varepsilon + \varepsilon \vert \nabla V\vert^2. $ It then yields
%e5.5 ###
\begin{equation}\label{tight}
\varphi(t) \leq k_\varepsilon t + \varepsilon \int_0^t \bigl| \nabla
V\bigl(Y_{G^{-1}(s)}\bigr)\bigr|^2\,\mathrm{d}s\quad  \mbox{and}\quad  \int_0^t \Delta
V\bigl(Y_{G^{-1}(s)}\bigr)\,\mathrm{d}s \leq a\varphi(t).
\end{equation}
Applying It\^{o}'s formula to $V(Y_{G^{-1}(t)})$, we obtain
%e5.7 ###
%e5.6 ###
\begin{eqnarray}\label{itoZ}
\hspace*{-25pt}V\bigl(Y_{G^{-1}(t)}\bigr) - V(Y_{0}) &=& \int_{0}^{G^{-1}(t)} (\nabla
V(Y_s),\mathrm{d}B_s) - \int_0^t \frac{(Y_{G^{-1}(s)},\nabla
V(Y_{G^{-1}(s)}))}{(r+G^{-1}(s))g(G^{-1}(s))}\,\mathrm{d}s\nonumber\\ [-8pt]\\ [-8pt]
&&{}- \int_0^t \bigl| \nabla V\bigl(Y_{G^{-1}(s)}\bigr)\bigr|^2\,\mathrm{d}s +
\frac{1}{2} \int_0^t \Delta V\bigl(Y_{G^{-1}(s)}\bigr)
\frac{\mathrm{d}s}{g(G^{-1}(s))}.\nonumber
\end{eqnarray}
Consider the (local-)martingale term of \eqref{itoZ}. On the set
$\{\int_{0}^{\infty} |\nabla V(Y_s)|^2\, \mathrm{d}s <\infty\}$, it is
bounded in $L^2$ and thus converges. Whereas on the set
$\{\int_{0}^{\infty} |\nabla V(Y_s)|^2\, \mathrm{d}s =\infty\}$, the
strong LLN implies that, for $t$ large enough, a.s.
\[
\int_{0}^{G^{-1}(t)} (\nabla V(Y_s),\mathrm{d}B_s) \leq
\frac{1}{2}\int_0^t \bigl|\nabla V\bigl(Y_{G^{-1}(s)}\bigr)\bigr|^2\,\mathrm{d}s.
\]
By
\eqref{itoZ}, we find for $t$ large enough
\begin{eqnarray*}
\int_0^t \bigl| \nabla V\bigl(Y_{G^{-1}(s)}\bigr)\bigr|^2\,\mathrm{d}s &\leq &
\int_0^t  \Delta V (Y_{G^{-1}(s)})  \frac{\mathrm{d}s}{g \circ G^{-1}(s)} - 2
V\bigl(Y_{G^{-1}(t)}\bigr) + 2V(Y_{0})\\
&&{}- 2\int_0^t \frac{(Y_{G^{-1}(s)},\nabla V(Y_{G^{-1}(s)}))}{(r+G^{-1}(s))g(G^{-1}(s))}\,\mathrm{d}s\\
&\leq & \frac{a\varphi(t)}{g( G^{-1}(t))} + 2V(Y_{0}) + \mathrm{O}\biggl(\int_0^t
\frac{\mathrm{d}s }{G^{-1}(s) g( G^{-1}(s))} \biggr).
\end{eqnarray*}
So we have a.s. $\int_0^t \vert \nabla V(Y_{G^{-1}(s)})\vert^2\,
\mathrm{d}s= \mathrm{O}(t) + a\varphi(t).$ Putting this result in \eqref{tight}
and choosing $\varepsilon$ small enough, we conclude that $\varphi(t) =
\mathrm{O}(t)$ a.s.
\end{pf}

\begin{theo}\label{th:ergo}
The process $Y$ satisfies the pointwise ergodic theorem. More
precisely, there exist some (deterministic) constants $a_i\geq 0$, such
that $\sum a_i =1$ and $\mu_t^Y = \frac{1}{t}\int_0^t \delta_{Y_s}\,
\mathrm{d}s$ converges (for the weak convergence of measures) toward
$\sum_{1\leq i\leq n}  a_i\delta_{m_i}$.
\end{theo}
\begin{pf}
By Bena\"{i}m and Schreiber \cite{BeSc}, Proposition \ref{pta} implies
that the limit points of the empirical measure of $Y_{G^{-1}(t)}$ are
included in the set of all the ``invariant measures'' for
 $\frac{\mathrm{d}}{\mathrm{d}t}\phi_t(x) = -\nabla
V(\phi_t(x))$ with the initial condition $\phi_0(x) =x$. All these
invariant measures are included in $\operatorname{Span}\{\delta_{m_1}, \ldots
,\delta_{m_n},\delta_{M_1},\ldots ,\delta_{M_p}\}$. Let $\mu_t^{G^{-1}}
= \frac{1}{t} \int_0^t \delta_{Y_{G^{-1}(s)}}\,\mathrm{d}s$. By Lemma
\ref{tightm}, the empirical measure $\mu^{G^{-1}}_t$ converges. One
also shows that $\mu_t$ is a Cauchy sequence in $L^1$: there exists
$C>0$ such that for any $s>0$,
\[
|\mathbb{E}\overline{\mu}_{t+s}
-\mathbb{E}\overline{\mu}_t| \leq \frac{s}{t(t+s)}
\int_0^t\mathbb{E}|X_u|\,\mathrm{d}u + \frac{1}{t+s}\int_t^{t+s}
\mathbb{E}|X_u|\, \mathrm{d}u \leq C \frac{s}{t+s}.
\]
So the limit
measure of $\mu_t^{G^{-1}}$ writes $\sum_{i=1}^n a_i \delta_{m_i} +
\sum_{i=1}^p b_i \delta_{M_i}$ (where $a_i, b_i$ are non-negative
constants such that $\sum (a_i+b_i) = 1$). And the same result holds
for $\mu_t$. Indeed, for all continuous bounded functions $\psi$ and
$t>s$, we have (by an integration by parts)
\begin{eqnarray*}
\int_{s}^{t} \psi(Y_u)\,\mathrm{d}u  &=& \frac{G(t)}{g(t)}
\mu_{G(t)}^{G^{-1}} (\psi) - \frac{G(s)}{g(s)} \mu_{G(s)}^{G^{-1}} (\psi) +
\int_{s}^{t} \frac{g^{\prime}(u)G(u)}{g^{2}(u)}
\mu_{G(u)}^{G^{-1}} (\psi) \,\mathrm{d}u\\
&=& (t-s) \mu_{G(t)}^{G^{-1}} (\psi) + \frac{G(s)}{g(s)}
 \bigl(\mu_{G(t)}^{G^{-1}} (\psi) - \mu_{G(s)}^{G^{-1}} (\psi) \bigr) \\
 &&{}+ \int_{s}^{t} \frac{g^{\prime}(u)G(u)}{g^{2}(u)}
 \bigl(\mu_{G(u)}^{G^{-1}} (\psi) - \mu_{G(t)}^{G^{-1}} (\psi)\bigr)\, \mathrm{d}u.
 \end{eqnarray*}
As $\mu_{G(t)}^{G^{-1}} (\psi)$ converges a.s., we deduce that
\[
\mu_{t} \psi = \mathrm{o}(1) + \mu_{G(t)}^{G^{-1}} (\psi) + \frac{1}{t} \int_{s}^{t} \frac{g^{\prime}(u)G(u)}{g^{2}(u)}
\bigl(\mu_{G(u)}^{G^{-1}} (\psi)  - \mu_{G(t)}^{G^{-1}} (\psi)\bigr)\, \mathrm{d}u.
\]
So, by Lemma \ref{l:ipp}, $\mu_t$ converges. We also wish to show that
$b_i=0$ for all $i$. Proposition \ref{maxY} implies that, for an
unstable critical point $M$, there exists a direction $j$ such that for
all $\varepsilon >0$, $\mathbb{P}(\forall s\ge T, |Y_s^{(j)} -
M^{(j)}|\le \varepsilon) = 0$. Consider a  continuous function $f$,
supported by a small ball (of radius $\alpha>0$) around $M$: $f$
vanishes in all critical points except $M$ and $f(M)=1$. Then we have
a.s. $\int_0^t \1_{\{| Y_s^{(j)}-M^{(j)}|\le \alpha \}}\,\mathrm{d}s =
\mathrm{o}(t)$ and $\frac{1}{t}\int_0^t f(Y_s)\,\mathrm{d}s$ converges almost
surely to $b$. So, we conclude that $b=0$.
\end{pf}

At this stage, we have proved that $Y$ satisfies the pointwise ergodic
theorem. The main question is whether $X$ also satisfies the pointwise
ergodic theorem or not. To answer it, we remind that $\overline{\mu}_t$
converges a.s. if and only if $\int_0^t Y_s \frac{\mathrm{d}s}{r+s}$
converges, and if $Y_t\mathop{\stackrel{\mathrm{a.s.}}{\longrightarrow}}\limits_{t\rightarrow\infty} 0$
polynomially fast, then $\overline{\mu}_t$ converges a.s.

\begin{propo}
The measure $\mu_t$ converges weakly if and only if $\overline{\mu}_t$
converges a.s.
\end{propo}
\begin{pf}
We have shown in Theorem \ref{th:ergo} that $Y$ is pointwise ergodic.
Consider the Fourier transform of $\mu_t$ and recall that $X_t = Y_t +
\overline{\mu}_t$. We have for all $u\in \mathbb{R}^d$:
\[
\frac{1}{t}\int_0^t \mathrm{e}^{\mathrm{i}(u,X_s)}\,\mathrm{d}s =
\frac{\mathrm{e}^{\mathrm{i}(u,\overline{\mu}_\infty)}}{t}\int_0^t
\mathrm{e}^{\mathrm{i}(u,Y_s)}\,
\mathrm{d}s + \frac{1}{t}\int_0^t \mathrm{e}^{\mathrm{i}(u,Y_s)}
\bigl(\mathrm{e}^{\mathrm{i}(u,\overline{\mu}_s)} - \mathrm{e}^{\mathrm{i}(u,\overline{\mu}_\infty)}
\bigr)\,
\mathrm{d}s.
\]
The first right member converges a.s. to
$\mathrm{e}^{\mathrm{i}(u,\overline{\mu}_\infty)}\sum_{1\le p\le n} \mathrm{e}^{\mathrm{i}(u,m_p)}$. For
the second\vspace*{1pt} right member, Ces\`{a}ro asserts that it converges a.s. to 0
if and only if $\overline{\mu}_t$ converges a.s.
\end{pf}

%s5.2 ###
\subsection{Almost sure convergence toward the local minima of $V$}

Let $0<\varepsilon< \varepsilon_0$ and $T>T_{0}$ be as in Section
\ref{s:extrema}. Let $m$ be a local minimum of $V$ such that $\vert
Y_{T} - m \vert < \varepsilon$.

\begin{lemma}
If $\lim_{t\rightarrow\infty}g(t)^{-1}\log G(t) = 0$, then
for all $\alpha>0$ we have $\int_0^\infty \mathrm{e}^{-\alpha g(t)}\,\mathrm{d}t
<+\infty$.
\end{lemma}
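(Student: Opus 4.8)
The plan is to turn the exponential integrand into a negative power of $G$ and then exploit the monotonicity of $g$. First I would fix $c>0$ and use the hypothesis $g(t)^{-1}\log G(t)\to 0$: there exists $T\geq 0$ such that $\log G(t)\leq \tfrac{c}{2}\,g(t)$ for all $t\geq T$, which rewrites as $e^{-cg(t)}\leq G(t)^{-2}$. Consequently $\int_T^\infty e^{-cg(t)}\,\mathrm{d}t\leq \int_T^\infty G(t)^{-2}\,\mathrm{d}t$, and the problem reduces to bounding the latter integral. The factor $2$ is arbitrary; any exponent strictly larger than $1$ works, so there is some freedom in the choice of $T$.

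The heart of the argument is the bound on $\int_T^\infty G(t)^{-2}\,\mathrm{d}t$. The key observation is that $g=G'$, so that $\frac{g(t)}{G(t)^2} = -\frac{\mathrm{d}}{\mathrm{d}t}\frac{1}{G(t)}$. Writing $G(t)^{-2} = \frac{1}{g(t)}\cdot\frac{g(t)}{G(t)^2}$ and using that $g$ is non-decreasing, so that $g(t)^{-1}\leq g(T)^{-1}$ for $t\geq T$, I would estimate
\[
\int_T^\infty \frac{\mathrm{d}t}{G(t)^2}\leq \frac{1}{g(T)}\int_T^\infty \frac{g(t)}{G(t)^2}\,\mathrm{d}t = \frac{1}{g(T)}\Big(\frac{1}{G(T)} - \lim_{t\to\infty}\frac{1}{G(t)}\Big) = \frac{1}{g(T)\,G(T)}.
\]
This is finite, which is exactly what is needed.

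Finally I would note that $G(t)\to\infty$ (indeed $g$ is positive and non-decreasing, so $G(t)\geq g(0)\,t$; in fact the hypothesis itself forces $g(t)\to\infty$, since otherwise $\log G(t)/g(t)$ would tend to infinity), so the boundary term $\lim_{t\to\infty}1/G(t)$ vanishes. Adding the trivially finite contribution $\int_0^T e^{-cg(t)}\,\mathrm{d}t\leq T$ then gives $\int_0^\infty e^{-cg(t)}\,\mathrm{d}t<\infty$. I do not expect a serious obstacle: the only point requiring a little care is the reduction step, namely producing an exponent strictly larger than $1$ so that the telescoping integral of $g/G^{\alpha}$ converges, after which the monotonicity of $g$ does the rest. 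The very same scheme applies to $\int_T^\infty G(t)^{-\alpha}\,\mathrm{d}t$ for any $\alpha>1$, using $\frac{\mathrm{d}}{\mathrm{d}t}G^{1-\alpha}=(1-\alpha)\,g\,G^{-\alpha}$.
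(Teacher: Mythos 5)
Your proof is correct and follows essentially the same route as the paper: both arguments use the hypothesis with $\varepsilon=c/2$ to obtain $e^{-cg(t)}\leq G(t)^{-2}$ for large $t$ and then check that $\int^\infty G(t)^{-2}\,\mathrm{d}t<\infty$. The only (cosmetic) difference is in that last step, where the paper substitutes the linear lower bound $G(t)\geq at$ to reduce to $\int (at)^{-2}\,\mathrm{d}t$, while you integrate $g/G^2$ exactly and use the monotonicity of $g$; both are immediate.
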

\begin{pf}
For all $\varepsilon >0$, there exists $t$ such that, for all $s\geq
t$, we have $g(s) \geq \varepsilon^{-1} \log G(s).$ Moreover, there
exists $a>0$ such that for $t$ large enough $g(t) \geq a$ and then
$G(t) \geq at$. So, we conclude that $\int_1^\infty
\mathrm{e}^{-\varepsilon^{-1}\alpha\log (at)}\,\mathrm{d}t <\infty$ (choose, for
instance, $\varepsilon = \alpha/2$).
\end{pf}

\begin{propo} \label{cvYai}
If $g(t)^{-1} \log G(t)$ converges to 0, then $Y_t$ converges a.s. and
for all $i$, we have $\mathbb{P}(\lim_{t\rightarrow\infty}
Y_t= m_i)>0$ and $\mathbb{P}(\lim_{t\rightarrow\infty}Y_t=
M_i )=0$.
\end{propo}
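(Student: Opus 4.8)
The plan is to combine three ingredients already established in the excerpt: the ergodic theorem for $Y$ (Theorem \ref{th:ergo}), the local trapping and escape estimates near critical points (Propositions \ref{propcvYai}, \ref{maxY}, \ref{saddleY} and their corollary), and the recurrence statement (Corollary \ref{propVcritique2}). The hypothesis $g(t)^{-1}\log G(t)\to 0$ is exactly what makes $\sqrt{g(t)^{-1}\log G(t)}$ bounded, so all the results of Section \ref{s:extrema} apply; moreover the preceding lemma giving $\int_0^\infty e^{-cg(t)}\,\mathrm{d}t<\infty$ will be the quantitative tool for summing escape probabilities.

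First I would fix a local minimum $m$ with $\nabla^2 V(m)>0$ and $0<\varepsilon<\varepsilon_0$. By Corollary \ref{propVcritique2}, $Y$ enters the $\varepsilon$-neighborhood of $Min\cup Max$ infinitely often, a.s. Proposition \ref{propcvYai} gives a $T_0$ such that for $T>T_0$, once $|Y_T-m|<\varepsilon$ the event $\{\forall s\ge T,\ |Y_s-m|<\varepsilon\}$ has positive probability; and its corollary states that on that event $Y_t-m\to 0$ a.s. (using $g(t)^{-1}\log G(t)\to0$). This immediately yields $\mathbb{P}(\lim_t Y_t=m_i)>0$ for every $i$. Symmetrically, Propositions \ref{maxY} and \ref{saddleY} show that for every unstable point $M$ the event $\{\forall s\ge T,\ |Y_s-M|<\varepsilon\}$ has probability zero, giving $\mathbb{P}(\lim_t Y_t=M_i)=0$.

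The substantive work is to upgrade these statements into a genuine almost sure convergence of $Y_t$, i.e.\ to show that $Y$ cannot oscillate forever between distinct minima. The strategy is a Borel--Cantelli argument along the excursions. Let me define successive stopping times: entrance times $\sigma_n$ at which $Y$ reaches distance $\varepsilon$ of $Min\cup Max$, and I would show that the probability of leaving the $\varepsilon$-ball around the current minimum before time $\sigma_{n+1}$ decays summably. The escape estimate comes from the comparison process $U$ of Proposition \ref{propcvYai}: since on the trapping event $|Y_{t+T}-m|=O(\sqrt{g(t+T)^{-1}\log G(t+T)})$ and this bound tends to $0$, the probability that $|Y_s-m|$ exceeds $\varepsilon$ after a large entrance time $\sigma_n$ is controlled by a tail of the Gaussian/Bessel comparison, which by the lemma $\int_0^\infty e^{-cg(t)}\,\mathrm{d}t<\infty$ is summable in $n$. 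Hence, by Borel--Cantelli, a.s.\ only finitely many escapes occur, so $Y$ is eventually trapped near a single minimum and converges there. Theorem \ref{th:ergo} then identifies the limit as one of the $m_i$ (the $a_i$ being $0/1$), while the saddle/maximum estimates rule out convergence to unstable points.

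The main obstacle I anticipate is making the escape-probability bound uniform in the (random) entrance time $\sigma_n$ and genuinely summable. One must transfer the trapping estimate, which is stated for a deterministic entrance time $T>T_0$, to the random times $\sigma_n\to\infty$ via the strong Markov property, and then quantify $\mathbb{P}(\sup_{s\ge\sigma_n}(U_s+V_s)\ge\varepsilon)$ using the law of the iterated logarithm applied to the reflected Brownian motion $W^+$ in the representation $U_t=e^{-aG(t+T)}W^+_{\alpha^{-1}(t)}$. Since the reflection bound gives a deviation of order $\sqrt{g^{-1}\log G}$, requiring this to exceed the fixed $\varepsilon$ forces a Gaussian large-deviation factor comparable to $e^{-c\,\varepsilon^2 g(\sigma_n)}$, and the lemma's integrability of $e^{-cg}$ is precisely what yields summability. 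Carefully matching the constants in this exponential estimate with the $O(\sqrt{g^{-1}\log G})$ fluctuation scale, while controlling the lower comparison process $V$ simultaneously, is the delicate part of the argument.
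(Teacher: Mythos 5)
Your treatment of the two probability statements is exactly the paper's: positive probability of convergence to each $m_i$ comes from the trapping event of Proposition \ref{propcvYai} together with its corollary, and $\mathbb{P}(\lim Y_t=M_i)=0$ comes from Propositions \ref{maxY} and \ref{saddleY}. The divergence is in the central claim, the almost sure convergence of $Y_t$, and there your argument has a genuine gap. The paper does not run an excursion/Borel--Cantelli scheme at all: it invokes Bena\"im's Proposition 4.6 to upgrade $Y$ to an \emph{almost sure} asymptotic pseudotrajectory of the gradient flow $\dot\phi=-\nabla V(\phi)$, using precisely the integrability $\int_0^\infty e^{-cg\circ G^{-1}(t)}\,\mathrm{d}t<\infty$ supplied by the preceding lemma together with a.s. boundedness of $Y$. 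The limit set of an a.s. asymptotic pseudotrajectory is internally chain transitive for the flow, and for a gradient flow with finitely many nondegenerate critical points such a set is a single equilibrium; this is what kills oscillation between critical points in one stroke.

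Your substitute for this step is only a heuristic, and the places where it would break are concrete. First, the law of the iterated logarithm gives an a.s. asymptotic rate for $U_t=e^{-aG(t+T)}W^+_{\alpha^{-1}(t)}$ but no quantitative tail bound for $\mathbb{P}(\sup_{s\ge \sigma_n}(U_s+|V_s|)\ge\varepsilon)$; you would need an actual maximal/Gaussian deviation inequality, and then the bound $e^{-c\varepsilon^2 g(\sigma_n)}$ is summable only if you control the growth of the random entrance times $\sigma_n$ (with your single radius $\varepsilon$ for both entrance and escape, an ``escape'' can occur instantaneously, so nothing prevents infinitely many escapes in bounded time; you need two radii and a lower bound on excursion durations, plus a conditional Borel--Cantelli since the events are dependent). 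Second, and more structurally, even a completed Borel--Cantelli over escapes from \emph{minima} does not prove convergence: Corollary \ref{propVcritique2} only returns $Y$ to $Min\cup Max$, and Propositions \ref{maxY}--\ref{saddleY} only say $Y$ cannot stay forever in a ball around an unstable point --- they do not exclude a trajectory that revisits saddle neighborhoods infinitely often and never enters the basin of a minimum. Ruling out that scenario is exactly the content of the chain-transitivity argument you have omitted, so the a.s. convergence is not established by your sketch.
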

\begin{pf}
Bena\"{i}m (\cite{coursdea}, Proposition 4.6) asserts that if $-\nabla
V(x)$ is a continuous globally integrable vector field, and if for all
$\alpha>0$ we have $\int_0^\infty \mathrm{e}^{-\alpha g\circ
G^{-1}(t)}\,
\mathrm{d}t <+\infty$ and $\mathbb{P}(\sup_t |Y_t|<\infty) =1$, then
$Y$ is a.s. an asymptotic pseudotrajectory for the flow induced by
$-\nabla V$. Actually, the first and last conditions are fulfilled
here. Moreover, as $G^{-1}$ is non-decreasing, the (finite) integral
$\int_0^\infty \mathrm{e}^{-\alpha g(t)}\,\mathrm{d}t$ is an upper bound for the
preceding one. Thus, $Y$ is a.s. an asymptotic pseudotrajectory for the
flow $\phi$ defined by \eqref{flow} and $\phi$ restricted to the limit
points of $Y$ does not admit any other attractor than the set of limit
points. Finally, $Y$ converges a.s. and its limit points are included
into the set $\{x; \nabla V(x) =0\}$.

If $Y$ converges to $Y_\infty$, then, due to Proposition \ref{maxY},
the limit $Y_\infty$ is not a local maximum. On the event $\{\forall
s\geq T; |Y_s-m_i|<\varepsilon\}$, occurring with a positive
probability by Proposition \ref{propcvYai}, we have a.s. $|Y_{t+T}
-m_i| \leq U_t$. As $\overline{\lim}_{t\rightarrow \infty}
U_t \sqrt{\frac{g(t)}{\log G(t)}} =1$ a.s., we conclude that $U_{t}
\mathop{\stackrel{\mathrm{a.s.}}{\longrightarrow}}\limits_{t\rightarrow\infty}0.$
\end{pf}

\begin{coro}
Suppose that $\lim g(t)^{-1} \log G(t) =0$.
\begin{enumerate}[(2)]
\item[(1)] If $V$ is a strictly uniformly convex function ($m$ its unique
minimum), then $Y_{t} \mathop{\stackrel{a.s.}{\longrightarrow}}\limits_{t\rightarrow\infty}m.$

\item[(2)] A necessary condition for the almost sure convergence of $Y$ to 0 is
that the potential $V$ admits a unique minimum at 0 (e.g., $V$ is
symmetric and strictly convex).
\end{enumerate}
\end{coro}

%s5.3 ###
\subsection{Back to $X$}\label{ss:psX}

\begin{theo}
Assume that $g(t)^{-1}\log G(t) = \mathrm{O}(h(t)^{-2})$, with
$h\dvtx\mathbb{R}_+\rightarrow \mathbb{R}_+$ such that\break $\int_0^\infty
\frac{\mathrm{d}s}{(1+s)h(s)} < +\infty$. Then, on the set
$\{Y_\infty\neq 0\}$, $\frac{X_t}{\log t}$ converges to $Y_\infty$.
Moreover:
\begin{enumerate}[(2)]
    \item[(1)] If 0 is the unique local minimum of $V$, then
    \[
    \mathbb{P}\biggl(\lim_{t\rightarrow \infty}X_t= \overline{\mu}+ \int_0^\infty Y_s
    \frac{\mathrm{d}s}{r+s} \biggr) = \mathbb{P} \biggl( \lim_{t\rightarrow \infty} \overline{\mu}_t
    = \overline{\mu} + \int_0^\infty Y_s \frac{\mathrm{d}s}{r+s} \biggr) = 1;
    \]
    \item[(2)] If $V$ admits 0 as a
     (non-unique) local minimum, then $X_t$ converges
      on the event $\{Y_t \stackrel{a.s.}{\longrightarrow} 0\}$ and
       diverges elsewhere. More precisely, one has
     \[
       \mathbb{P}\biggl(\lim_{t\rightarrow \infty} X_t= \overline{\mu}
       + \int_0^\infty Y_s \frac{\mathrm{d}s}{r+s} \biggr)
       + \mathbb{P}\Bigl(\lim_{t\rightarrow \infty}|X_t| = \infty\Bigr) =
       1
       \]
    and
      \[
       1>\mathbb{P}\biggl(\lim_{t\rightarrow \infty}X_t
       = \overline{\mu}+ \int_0^\infty Y_s \frac{\mathrm{d}s}{r+s}\biggr)
       =\mathbb{P}\biggl(\lim_{t\rightarrow \infty} \overline{\mu}_t = \overline{\mu}
       + \int_0^\infty Y_s \frac{\mathrm{d}s}{r+s}\biggr)>0;
     \]
    \item[(3)] If 0 is not a local minimum of
    $V$, then
   \[
    \mathbb{P}\Bigl(\lim_{t\rightarrow \infty}|X_t|= \infty \Bigr)
    = \mathbb{P}\Bigl(\lim_{t\rightarrow \infty}|\overline{\mu}_t|= \infty\Bigr)= 1.
    \]
\end{enumerate}
\end{theo}
\begin{pf}
Denote $m=(m^{(1)},\ldots, m^{(d)})$. First, suppose that $m=0$. By
Proposition \ref{cvYai}, $Y_t$ converges toward 0 with a positive
probability. On this event, Proposition \ref{propcvYai} implies that
the integral $\int_0^t \frac{Y_s}{r+s}\,\mathrm{d}s$ converges. So,
$\overline{\mu}_t $ converges toward this (limit) integral and the
result follows for $X$. On the other hand, if $m\neq 0$, then
$\mathbb{P}(Y_t \rightarrow m) >0$ and so the $j$th coordinate of
$\overline{\mu}_t$ converges to  $\operatorname{sgn}(m^{(j)}) \infty$. So the
direction $j$ is unstable and $X_t$ does not converge a.s. Moreover, on
the set $\{Y_\infty \neq 0\}$, we have
\[
\biggl|\frac{\overline{\mu}_t}{\log t}- Y_\infty \biggr| \leq
\frac{1}{\log t}\int_0^t |Y_s -Y_\infty| \frac{\mathrm{d}s}{r+s}\le
\frac{1}{\log t}\int_0^t \sqrt{\frac{\log G(s)}{g(s)}}
\frac{\mathrm{d}s}{r+s}.
\]
The latter upper bound tends to 0 by the law
of the iterated logarithm (Proposition \ref{propcvYai}). As
$\frac{X_t}{\log t} = \frac{\overline{\mu}_t}{\log t} + \frac{Y_t}{\log
t}$, the result follows.
\end{pf}
\begin{rk}
Any polynomial $h$ satisfies the required condition. In particular, one
can choose $g(t) = t^\alpha (\log (1+t))^\beta$ with $\alpha >0$, or
$\alpha = 0$ and $\beta>2$.
\end{rk}

\section*{Acknowledgements} The authors thank O. Raimond and M.
Bena\"{i}m for careful reading and discussions. We are very grateful to
two anonymous referees for their helpful comments. A. Kurtzmann has been
partially supported by the Swiss National Science Foundation grant
200020-112316/1.

\printhistory

\end{document}